\date{December 27, 2014}
\newtheorem{dummy}{anything}[section]
\newtheorem{theorem}[dummy]{Theorem}
\newtheorem*{thma}{Theorem A}
\newtheorem*{thmb}{Corollary B}
\newtheorem*{thmc}{Theorem C}
\newtheorem{lemma}[dummy]{Lemma}
\newtheorem{proposition}[dummy]{Proposition}
\newtheorem{corollary}[dummy]{Corollary}
\theoremstyle{definition}%%Change Theoremstyle
\newtheorem{definition}[dummy]{Definition}
  \newtheorem{example}[dummy]{Example}
  \newtheorem{remark}[dummy]{Remark}
  \newtheorem*{Remark}{Remark}
   \newtheorem*{Definition}{Definition}
    \newtheorem*{question}{Question}
  \newtheorem*{acknowledgement}{Acknowledgement}
\newcommand{\cF}{\mathcal F}
\newcommand{\cH}{\mathcal H}
\newcommand{\cJ}{\mathcal J}
\newcommand{\cS}{\mathcal S}
\newcommand{\cZ}{E}
\newcommand{\bC}{\mathbf C}
\newcommand{\bD}{\mathbf D}
\newcommand{\bF}{\mathbf F}
\newcommand{\bZ}{\mathbf Z}
\newcommand{\bN}{\mathbf N}
\newcommand{\bP}{\mathbf P}
\newcommand{\bV}{\mathbf V}
\newcommand{\bbC}{\mathbb C}
\newcommand{\bbF}{\mathbb F}
\newcommand{\bbZ}{\mathbb Z}
 \DeclareMathOperator{\Ind}{Ind}
\DeclareMathOperator{\Res}{Res}
\DeclareMathOperator{\Iso}{\mathfrak{Iso}}
\DeclareMathOperator{\Mor}{Mor}
 \DeclareMathOperator{\Map}{Map}
\DeclareMathOperator{\ind}{Ind}
 \DeclareMathOperator{\Dim}{Dim}
\DeclareMathOperator{\HomDim}{hDim}
\DeclareMathOperator{\hdim}{hdim}
  \DeclareMathOperator{\rk}{rank}
 \DeclareMathOperator{\rank}{rank}
 \DeclareMathOperator{\Rep}{Rep}
\newcommand{\la}{\langle}
\newcommand{\ra}{\rangle}
\newcommand{\vv}{\, | \,}
\newcommand{\bd}{\partial}
\newcommand\Fp{\bbF_p}
\def\bZp{\bZ_{(p)}}
\def\bZq{\bZ_{(q)}}
\def\G{\varGamma}
\newcommand{\leftexp}[2]{{\vphantom{#2}}^{ #1}{\hskip-1pt#2}}%\leftexp
\DeclareMathOperator{\Or}{Or}
\newcommand{\Sub}{{\cS}(G)}
\newcommand\RG{R\G}
\newcommand\ZG{\bZ\G}
\newcommand\OrG{\Or _{\cF}G}
\newcommand\un{\underline}
\newcommand{\up}{^{(p)}}
\newcommand{\uq}{^{(q)}}
\newcommand\uR[1]{R[{#1}^{\, \textbf{?}\,}]}
\newcommand\uC[1]{\CC({#1}^{\textbf{?}};R)}
\newcommand\uCZ[1]{\CC({#1}^{\textbf{?}};\bZ)}
\newcommand\CC{\bC}
\newcommand\DD{\bD}
\newcommand\NN{\bN}
\newcommand\PP{\bP}
\newcommand\VV{\bV}
\newcommand{\ZZ}{\mathbf Z}
\newcommand\uH[1]{H_*({#1}^{\textbf{?}};R)}
\DeclareMathOperator{\Qd}{Qd}
\newcommand{\bigast}{\divideontimes}
\newcommand\nr[2]{\medskip\noindent\textbf{#1:\ #2.}}
\newcommand{\PG}{\cS_G}
\newcommand{\gls}{gorenstein-lyons-solomon1998}
\begin{document}

\title{Group actions on Spheres with rank one isotropy}
\author{Ian Hambleton}
\author{Erg\"un Yal\c c\i n}

\address{Department of Mathematics, McMaster University,
Hamilton, Ontario L8S 4K1, Canada}

\email{hambleton@mcmaster.ca }

\address{Department of Mathematics, Bilkent University,
06800 Bilkent, Ankara, Turkey}

\email{yalcine@fen.bilkent.edu.tr }

\thanks{Research partially supported by NSERC Discovery Grant A4000. The second author is partially supported by T\" UB\. ITAK-TBAG/110T712.}

\begin{abstract}
Let $G$ be a rank two finite group, and let $\cH$ denote the family of all rank one $p$-subgroups of $G$ for which  $\rk_p(G)=2$. 
We show that a rank two finite group $G$ which satisfies certain explicit group-theoretic conditions  admits a finite $G$-CW-complex $X\simeq S^n$ with isotropy in $\cH$, whose fixed sets are homotopy spheres. Our construction provides an infinite family of new non-linear $G$-CW-complex examples. 
\end{abstract}

\maketitle
 
\section{Introduction}
\label{sect:introduction}

Let $G$ be a finite group. The unit spheres $S(V)$ in  finite-dimensional orthogonal representations of $G$ provide the basic examples of smooth linear $G$-actions on spheres. These linear  actions satisfy a number of special constraints on the dimensions of fixed sets and the structure of the isotropy subgroups, arising from character theory.  However, such constraints do not hold in general for smooth $G$-actions on spheres, unless $G$ has prime power order (see  \cite{dotzel-hamrick1}). 
Our goal in this series of papers is to construct new examples of smooth \emph{non-linear} finite group actions on spheres, with prescribed isotropy.

 In the first paper of this series  \cite{h-yalcin3}, we studied group actions on spheres in the setting of 
\emph{$G$-homotopy representations},
introduced by tom Dieck (see \cite[Definition 10.1]{tomDieck2}). These are finite (or more generally finite dimensional) $G$-CW-complexes $X$ satisfying the property that for each $H\leq G$, the fixed point set $X^H$ is homotopy equivalent to a sphere $S^{n(H)}$ where $n(H)=\dim X^H$.  We introduced algebraic homotopy representations as suitable chain complexes over the orbit category and proved a realization theorem for these algebraic models. 

We say that $G$ has \emph{rank $k$} if it contains a subgroup isomorphic to $(\bZ/p)^k$, for some prime $p$, but no subgroup $(\bZ/p)^{k+1}$, for any prime $p$. In this paper,  we use chain complex methods to study the following problem, as the next step towards smooth actions.  

\begin{question}\label{ques:mainquestion} For which finite groups  $G$, does there exist a finite $G$-CW-complex $X\simeq
S^n$ with all isotropy subgroups of rank one ~?
\end{question}

The isotropy assumption implies that $G$ must have rank $\leq 2$, by P.~A.~Smith theory  (see Corollary \ref{cor:ranktwo}).    
Since every rank one finite group can act freely on a finite
complex homotopy equivalent to a sphere (Swan \cite{swan1}), we will restrict  to groups of rank two.  

There is another group theoretical \emph{necessary} condition related to fusion properties of the Sylow subgroups. This condition involves the rank two finite group $\Qd(p)$ which is the group defined as the semidirect product 
$$\Qd(p) = (\bZ/ p \times \bZ / p)\rtimes SL_2(p)$$
with the obvious action of $SL_2(p)$ on $\bZ / p \times \bZ /p$. In his thesis, \" Unl\" u \cite[Theorem 3.3]{unlu1} showed that $\Qd(p)$ does not act on a finite CW-complex $X\simeq S^n$ with rank one isotropy.  This means that any rank two finite group which includes $\Qd(p)$ as a subgroup cannot admit such actions. 
 
More generally, we say $\Qd(p)$ is $p'$-\emph{involved in $G$} if there exists a subgroup $K \leq G$, of order prime to $p$, such that $N_G(K)/K$ contains a subgroup isomorphic to $\Qd(p)$. The argument given by \" Unl\" u in \cite[Theorem 3.3]{unlu1} can be extended easily to obtain the stronger necessary condition  
(see Proposition \ref{pro:involveQdp}):

\medskip
\noindent ($\bigast$). \emph{Suppose that there exists a finite $G$-CW-complex $X\simeq S^n$ with rank one isotropy. Then $\Qd(p)$ is not $p'$-involved in $G$,  for any odd prime $p$.}

\medskip

In the other direction, the Sylow subgroups of 
rank two finite groups which do not $p'$-involve $\Qd(p)$, for $p$ odd (sometimes called \emph{$\Qd(p)$-free} groups),  have some interesting complex representations.  

\begin{Definition} 
A finite group $G$ has a $p$-\emph{effective character} if each $p$-Sylow subgroup $G_p$ of $G$ has a character $\chi\colon G_p \to \bbC$ 
which (i) \emph{respects fusion in $G$}, meaning that $\chi (gxg^{-1})=\chi(x)$ whenever $gxg^{-1} \in G_p$ for some $g \in G$ and $x \in G_p$, and (ii) satisfies $\la \chi |_E, 1_E\ra = 0$ for each elementary abelian $p$-subgroup $E$ of $G$ with $\rank E = \rank_p G$. 
\end{Definition}

Jackson \cite[Theorem~47]{jackson1} proved that a rank two group $G$ has a $p$-effective character if and only if $p=2$, or $p$ is odd and $G$ is  $\Qd(p)$-free. We will use these characters to obtain finite $G$-homotopy representations with rank one prime power isotropy, assuming an additional closure property at the prime $p=2$.

%These characters are obtained in most cases as induced characters from a strongly closed subgroup $\Omega_1 Z(G_p)$ in the Sylow $p$-subgroup $G_p$ of $G$, and they can be used to obtain $G$-homotopy spheres with rank one isotropy subgroups (see \cite[Section 3]{jackson1}). When $p$ is odd, the chain complexes of these homotopy spheres are algebraic homotopy representations (see Lemma \ref{lem:podd}). For $p=2$, this we will assume that $G$ is \emph{$2$-regular}, meaning that either $\Omega_1(Z(G_2))$ is strongly closed in $G_2$ with respect to $G$, or $G$ has the following intersection property. 
%\remI{changed}

\begin{Definition}\label{def:IntProp} A finite group $G$ has the   \emph{rank one intersection property} if for every pair $H,K \leq G$ of rank one $2$-subgroups  such that $H \cap K\neq 1$, the subgroup $\la H , K\ra$ generated by $H$ and $K$ is a $2$-group. We say that $G$ is \emph{ $2$-regular}
%Let $G$ be a finite group with $\rk_2(G) =2$. We say $G$ is \emph{strongly  $2$-regular} if 
if (i)  $\Omega_1(Z(G_2))$ is strongly closed in $G_2$ with respect to $G$, or (ii) $G$ has the  rank one intersection property.
%for every pair $H,K \leq G$ of rank one $2$-subgroups  such that $H \cap K\neq 1$, the subgroup $\la H , K\ra$ generated by $H$ and $K$ is a $2$-group.
\end{Definition}

Let  $\cF$ be a family of subgroups of $G$ closed under conjugation and taking subgroups. For constructing group actions on CW-complexes with isotropy in the family $\cF$, a good algebraic approach is to consider projective chain complexes over the orbit category relative to the family $\cF$ (see \cite{hpy1}, \cite{h-yalcin3}). 

\smallskip
Let $\PG$  denote the set of primes $p$ such that $\rk_p(G)=2$. Let $\cH_p$ denote the family of all rank one  $p$-subgroups $H \leq G$, for $p \in \PG$, and  let $\cH = \bigcup\, \{ H \in \cH_p \vv p\in \PG\}$.
Our main result is the following:

\begin{thma}\label{thm:main} Let $G$ be
rank two finite group satisfying  the following two conditions: 
\begin{enumerate}
\item $G$ is    $2$-regular if $2 \in \PG$, and $G$ is $\Qd(p)$-free for all $p\in \PG$ with $p>2$;  
% either $\Omega_1(Z(G_2))$ is strongly closed in $G_2$ with respect to $G$, or $G$ satisfies the rank one intersection property. 
\item  If $1 \neq H \in \cH_p$, then $\rk_q(N_G(H)/H) \leq 1$ for every prime $q\neq p$.  
\end{enumerate}
Then there exists a finite    $G$-homotopy representation $X$ with isotropy in $\cH$. 
\end{thma}

Theorem A is an extension of our earlier joint work with Semra Pamuk  \cite{hpy1} where we have shown that the first non-linear example, the permutation group $G = S_5$ of order 120, admits a finite $G$-CW-complex $X \simeq S^n$ with rank one isotropy. Theorem A gives a new proof of this earlier result, by a more systematic method:  for $G=S_5$, the set $\PG$ includes only the  prime $2$ and it can be easily seen that $G$ satisfies the rank one intersection property. 
 The second condition above also holds since all $p$-Sylow subgroups of $S_5$ for odd primes are cyclic.  More generally, we have:

\begin{thmb}\label{cor:main}
Let $p$ be a fixed prime and $G$ be a finite group such that $\rk _p (G)=2$, and $\rk _q (G)=1$ for every prime $q\neq p$. If $G$ is $\Qd(p)$-free when $p>2$, and  
$G$ is    $2$-regular when $p=2$,
then there exists a finite    $G$-homotopy representation $X$ with rank one $p$-group isotropy. 
\end{thmb}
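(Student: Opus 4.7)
The plan is to deduce Corollary B directly from Theorem A by checking that the hypotheses of that theorem follow from the assumptions here. Under the assumptions $\rk_p(G)=2$ and $\rk_q(G)=1$ for every prime $q\neq p$, the set $\cS_G$ is just the singleton $\{p\}$. For condition (i) of Theorem A, if $p$ is odd then non-$p'$-involvement of $\Qd(p)$ is exactly the hypothesis of Corollary B, while if $p=2$ then $\cS_G$ contains no odd prime at all and condition (i) is vacuous.

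The only substantive step is condition (ii). Fix a nontrivial $H \in \cH_p$ and a prime $q \neq p$, and let $\bar E \leq N_G(H)/H$ be an elementary abelian $q$-subgroup. I would lift $\bar E$ to its preimage $E \leq N_G(H)$, giving a short exact sequence $1 \to H \to E \to \bar E \to 1$ in which $|H|$ is a power of $p$ and $|\bar E|$ is a power of $q$, hence coprime to $|H|$. Schur--Zassenhaus then produces a complement inside $E$ isomorphic to $\bar E$, which is an elementary abelian $q$-subgroup of $G$ of rank $\rk(\bar E)$. The hypothesis $\rk_q(G)\leq 1$ therefore forces $\rk(\bar E) \leq 1$, and taking the supremum over all such $\bar E$ gives $\rk_q(N_G(H)/H) \leq 1$ as required.

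Once both conditions are established, Theorem A supplies a finite $G$-CW-complex $X$ with isotropy in $\cH = \cH_p$ such that $X^H$ is a homotopy sphere for every $H \in \cH$. Taking $H$ to be the trivial subgroup (which lies in $\cH_p$ since $\cH_p$ is closed under taking subgroups) yields $X \simeq S^n$, and by construction the isotropy subgroups are rank one $p$-subgroups, which is the desired conclusion. The principal obstacle is essentially bookkeeping: the Schur--Zassenhaus lift is the only nontrivial ingredient in the reduction, and all the genuine difficulty is absorbed into Theorem A itself.
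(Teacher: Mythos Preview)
Your proof is correct and follows the same route as the paper. The paper's argument is a single sentence---``if $\rk_q(G)\leq 1$, then for every $p$-group $H$, we must have $\rk_q(N_G(H)/H)\leq 1$''---and you have simply unpacked this implication via Schur--Zassenhaus (a Sylow $q$-subgroup of the preimage would also do the job, but your version is fine).
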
 

As a consequence of Corollary B, we %also obtain
 show  that $G=PSL_2(q)$, where $q\geq 5$ is a prime, 
admits a    $G$-homotopy representation with cyclic $2$-subgroup isotropy. Note that none of the simple groups $PSL_2(q)$, $q >7$, admit orthogonal representation spheres with rank one isotropy (see Section \ref{sec:simple}), so the actions we construct provide an infinite family of new examples of \emph{non-linear} actions.

More generally, using Theorem A, we obtain many new non-linear $G$-CW-complex examples.  
In particular, we show that the alternating group $A_6$ admits finite $G$-CW-complexes $X \simeq S^n$ with rank one isotropy  (see Example \ref{ex:asix}). We also discuss  why $G=A_7$ can not admit such actions if we require $X$ to be an $G$-homotopy  representation with rank one prime power isotropy (see Example \ref{ex:aseven}). In fact we show exactly which  of the rank two simple groups (see the list in \cite[p.423]{adem-smith1}) can admit such actions.

\begin{thmc} Let $G$ be a finite simple group of rank two. Then there exists a finite   $G$-homotopy representation with rank one isotropy of prime power order  if and only if 
$G$ is one of the following: (i) $PSL_2(q)$, $q\geq 5$, (ii)  $PSL_2 (q^2)$, $q \geq 3$, 
(iii) $ PSU_3(3)$, or (iv) $ PSU_3(4)$.
\end{thmc}

We remark that  $G = PSL_3(q)$, $q$ odd, and $G = PSU_3(q)$, with $9 \mid (q+1)$, are the rank two simple groups that are not $\Qd(p)$-free\footnote{This case seems to have been overlooked in \cite[p.430]{adem-smith1}} at some odd prime. The remaining simple groups $G=PSU_3(q)$, $q\geq 5$, are eliminated by  the Borel-Smith conditions (see Section \ref{sec:simple}). 
 The groups $PSU_3(3)$ and $PSU_3(4)$ have a linear actions on  spheres with rank one prime power isotropy. We note that the group $G=PSU_3(3)$  does not satisfy the rank one intersection property
 (see Example \ref{ex:PSU(3,3)}). 
  
In Section \ref{sec:examples},  we give the motivation for condition (ii) in Theorem A on the $q$-rank of the normalizer quotients $N_G(H)/H$ for all the subgroups $H \in \cH$. It is used
 in a crucial way (at the algebraic level) in the construction of a
  finite $G$-CW-complex
    $X\simeq S^n$ with rank one isotropy in $\cH$, which is a    $G$-homotopy representation.
 However, condition (ii) in Theorem A is actually \emph{necessary} only for the subgroups $H \in \cH$ such that $X^H \neq \emptyset$ (see Remark \ref{rem:necessity2}), but not, in general,  for all rank one $p$-subgroups (see Example \ref{ex:aseven}). 
Determining the precise list of necessary and sufficient conditions is still an open problem.

We will obtain Theorem A from a more general technical result, 
Theorem \ref{thm:maintech}, 
which accepts as input a compatible collection of representations defined on all $p$-subgroups of $G$, for a given set of primes (see Definition \ref{def:sylowrep}), 
and produces a finite $G$-CW complex. 
In principle, Theorem \ref{thm:maintech} can be used to construct other interesting non-linear examples for finite groups with specified $p$-group isotropy. 
%\remE{changed} \remI{OK}

\medskip
 Here is a brief outline of the paper. We denote the orbit category relative to a family $\cF$  by $\G _G =\Or _{\cF} G$,   and construct projective chain complexes over $\RG_G$ for various $p$-local coefficient rings $R=\bZ _{(p)}$. 
 To prove Theorem \ref{thm:maintech}, we first introduce  \emph{algebraic homotopy representations} (see Definition \ref{def:algrep}), as chain complexes over $\RG_G$
 satisfying algebraic versions of the conditions found in tom Dieck's 
 \emph{$G$-homotopy representations} (see \cite[II.10.1]{tomDieck2}, \cite{dotzel-hamrick1}, 
and Remark \ref{simplyconnected}). 
 In Section \ref{sect:AlgHomRep} we summarize the results of \cite{h-yalcin3} which show that the conditions in Definition \ref{def:algrep} lead to 
 necessary and sufficient conditions for  a chain complex over $\RG_G$ to be homotopy equivalent to a chain complex of a    $G$-homotopy representation (see Theorem \ref{thm:tightness}).

 In Section  \ref{sec:prime p}, we construct $p$-local chain complexes where  the isotropy subgroups are $p$-groups.   In Section \ref{sec:other primes}, we add homology to these local models so that these modified local complexes $\CC \up$ all have exactly the same dimension function.  Results established in \cite{hpy1} are used to  glue these algebraic complexes together over $\bZ\G_G$, and  then  to eliminate a finiteness obstruction. In Section \ref{sec:main theorem}  we combine these ingredients to
give a complete proof for Theorem \ref{thm:maintech} and Theorem A.  In Section \ref{sec:examples}, we discuss the necessity of the conditions in Theorem A and provide  a nonlinear action for the group $G=A_6$.
 %and $PSU_3(3)$ 
%
We study the rank two simple groups and prove Theorem C in Section \ref{sec:simple}. 

\begin{Remark}
One motivation for this project is the work of Adem-Smith \cite{adem-smith1} and Jackson \cite{jackson1} on the existence of free actions of finite groups on a product of two spheres. There is an interesting set of conditions related to this problem. In the following statements, $G$ denotes a finite group of rank two.
\begin{enumerate}\addtolength{\itemsep}{0.3\baselineskip}
\item $G$ acts on a finite complex $X$ homotopy equivalent to a sphere, with rank one isotropy.
\item  $G$ acts with rank one isotropy  on a finite dimensional complex $X$ which has a 
mod $p$ homology of a sphere. 
\item $G$ does not $p'$-involve $\Qd(p)$, for $p$ an odd prime.
\item $G$ has a $p$-effective character $\chi : G_p \to \bbC$. %\remE{changed} \remI{OK}

\item There exists a spherical fibration $Y \to BG$, such that the total space $Y$ has periodic cohomology.
\item $G$ acts freely on a finite complex homotopy equivalent to a product of two spheres.
\end{enumerate}
The implications $(i)\Rightarrow (i+1)$ hold for this list (suitably interpreted), 
where $(i) \Rightarrow (ii)$ is clear (for each prime $p$), and $(ii) \Rightarrow (iii)$ is our Proposition \ref{pro:involveQdp}. The implication $(iii) \Leftrightarrow (iv)$ is due to Jackson \cite[Theorem 47]{jackson1},  using  \cite[Theorem 44]{jackson1} to show that $G$ 
always has a $2$-effective character. 

If condition $(iv)$ holds for all the primes dividing the order of $G$, then condition $(v)$ holds. This needs some explanation.  First, the existence of a spherical fibration $ Y \to BG$ classified by $\varphi\colon BG \to BU(n)$, with $p$-effective Euler class $\beta(\varphi) \in H^n(G;\bZ)$ for all primes $p$,  was proved by Jackson  \cite{jackson2}, \cite[Theorem 16]{jackson1}. By construction, for each elementary abelian $p$-subgoup $E$ of $G$ with $\rk E = \rk_p G$, there exists a unitary representation $\lambda \colon E \to U(n)$ such that $\varphi_E = B\lambda$ and $\la \lambda, 1_E\ra = 0$ (see  \cite[Definition 11]{jackson1}). Adem and Smith  \cite[Definition 4.3]{adem-smith1} give an equivalent definition of a $p$-effective cohomology class $\beta \in H^n(G;\bZ)$ as a class for which the complexity $cx_G(L_\beta\otimes \Fp)=1$ (see Benson \cite[Chap.~5]{benson2}). It follows from  \cite[5.10.4]{benson2} that $L_{\beta(\varphi)} \otimes \Fp$ is a  periodic module,  and hence cup product with a periodicity generator $\alpha$ for this module gives the periodicity of $H^*(Y;\Fp)$ in high dimensions. Therefore $Y$ has periodic cohomology in the sense of Adem-Smith \cite[Definition 1.1]{adem-smith1}.
Finally, $(v) \Rightarrow (vi)$ follows from the main results of Adem-Smith \cite[Theorems 1.2, 3.6]{adem-smith1}.

The reverse implications are mostly unknown. For example, it is not known whether $\Qd(p)$ itself can act freely on a product of two spheres. In \cite[Theorem 47]{jackson1} it was claimed that $(iii) \Rightarrow (i)$, but the ``proof" seems to confuse homotopy actions with finite 
$G$-CW complexes. However, we show in Corollary \ref{cor:p-local sphereA} that $(iii) \Rightarrow (ii)$. Finding new criteria for the implication $(iii) \Rightarrow (i)$  is the subject of this paper.
\end{Remark}

\begin{acknowledgement} The authors would like to thank Alejandro Adem, Jesper Grodal, Radha Kessar and Assaf Libman for helpful communications, and to Ron Solomon for his invaluable help in providing detailed information about the rank two simple groups. 
\end{acknowledgement}

\section{Algebraic homotopy representations}
\label{sect:AlgHomRep}

In transformation group theory, a $G$-CW-complex $X$ is called a    \emph{$G$-homotopy representation} if it has the property that $X^H$ is homotopy equivalent to the sphere $S^{n(H)}$ where $n(H)=\dim X^H$,  for every $H\leq G$ (see tom Dieck  \cite[Section II.10]{tomDieck2}).

In this section we summarize the results of \cite{h-yalcin3} which gives the definition and main properties of a suitable algebraic analogue, called \emph{algebraic homotopy representations}.

Let $G$ be a finite group and $\cF$ be a family of subgroups of $G$ which is closed under conjugations and taking subgroups. 
The orbit category $\OrG$ is defined as the category whose objects are orbits of type
$G/K$, with $K \in \cF$, and where the morphisms from $G/K$ to $G/L$
are given by $G$-maps:
$$ \Mor _{\OrG} (G/K , G/L ) = \Map _G (G/K, G/L ).$$
The category $\G _G= \OrG$ is a small category, and we can consider the
module category over $\G _G$. Let $R$ be a commutative ring with unity.
A (right) $\RG _G$-module $M$ is a contravariant functor from $\G _G$ to
the category of $R$-modules. We denote the $R$-module $M(G/K)$
simply by $M(K)$ and write $M(f)\colon  M(L) \to M(K)$  for a $G$-map $f\colon 
G/K \to G/L$.  The further details about the properties of modules over the orbit category, such as the definitions of free and projective modules, can be found in \cite{hpy1} (see also L\"uck \cite[\S 9,\S 17]{lueck3} and tom Dieck \cite[\S 10-11]{tomDieck2}).

We will consider chain complexes $\CC$ of $\RG _G$-modules, such that $\CC _i=0$ for $i<0$. We call a chain complex $\CC$ \emph{projective} (resp.~\emph{free}) if for all $i\geq 0$,
the modules $\CC_i$ are projective (resp.~free). We say that a chain complex $\CC$ is \emph{finite} if $\CC_i = 0$ for $i > n$, and the chain modules $\CC_i$ are all finitely-generated $\RG_G$-modules.

Given a $G$-CW-complex $X$, associated to it,
there is a chain complex of $\RG_G$-modules
$$\uC{X}:   \quad  \cdots \to \uR{X_n}
\xrightarrow{\bd_{n}}  \uR{X_{n-1}} \rightarrow
\cdots\xrightarrow{\bd_{1}}  \uR{X_0} \to 0$$ where $X_i$ denotes
the set of $i$-dimensional cells in $X$ and $\uR{X_i}$ is the
$\RG_G$-module defined by $ \uR{X_i}(H)= R[X_i^H]$. We denote the
homology of this complex by $\uH{X}$. If the family $\cF$ includes
the isotropy subgroups of $X$, then the complex $\uC{X}$ is a chain complex of free $\RG_G$-modules.  

The \emph{dimension function} of a finite dimensional chain complex $\CC$ over $\RG_G$ is defined as the function $\Dim \CC \colon  \Sub \to \bZ$, where $\Sub$ denotes the family of \emph{all} subgroups of $G$,  given by
 $$(\Dim \CC )(H)=\dim \CC (H)$$ for all $H \in \cF$.
If $\CC (H)$ is the zero complex or if $H$ is  a subgroup such that $H \not \in \cF$, then we define $(\Dim \CC)(H)=-1$. 
The dimension function $\Dim \CC$  is constant on conjugacy classes (a super class function).
 In a similar way, we can define the \emph{homological dimension function}  $\HomDim \CC \colon  \Sub \to \bZ$ of a chain complex $\CC$ of $\RG_G$-modules. 

We call a function $\un{n}\colon \Sub \to  \bZ$ \emph{monotone} if it
satisfies the property that $\un{n}(K) \leqslant \un{n}(H)$ whenever
$(H) \leq (K)$. We say that a monotone function  $\un{n}$ is
\emph{strictly monotone} if $\un{n}(K) < \un{n}(H)$, whenever $(H)< (K)$.  
We have the following:

\begin{lemma}[{\cite[Lemma 2.6]{h-yalcin3}}
] \label{lem:monotone}
The dimension function of a projective chain
complex of $\RG_G$-modules is a monotone function.
\end{lemma}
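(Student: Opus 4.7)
The plan is to reduce the lemma to a pointwise statement about a single projective $\RG_G$-module: if $P$ is projective and $H, K$ are subgroups with $(H) \leq (K)$, then $P(K) \neq 0$ implies $P(H) \neq 0$. Applied term by term to the chain modules $\CC_d$, this immediately gives $(\Dim \CC)(H) \geq (\Dim \CC)(K)$ whenever $(H)\leq (K)$, which is the definition of monotonicity.

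To prove the pointwise statement, I would first observe that the evaluation of an $\RG_G$-module at a subgroup depends only on its conjugacy class, since the orbits $G/H$ and $G/H^g$ are isomorphic objects in $\OrG$. So after replacing $H$ by a conjugate, we may assume $H \leq K$. The canonical projection $\pi\colon G/H \to G/K$, $gH \mapsto gK$, is then a morphism in $\OrG$ which is surjective as a map of sets, and since precomposition with a surjection is injective we obtain injections
$$\Mor_G(G/K, G/L) \hookrightarrow \Mor_G(G/H, G/L), \qquad f \mapsto f \circ \pi,$$
for every $L \in \cF$.

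Taking free $R$-modules on both sides, this shows that the structure map $F(K) \to F(H)$ induced by $\pi$ is injective on every representable free module $F = R[\Mor_G(-,G/L)]$, and hence on every free $\RG_G$-module by direct sum. For a general projective $P$, write $P \oplus Q = F$ with $F$ free; naturality forces the summand decomposition to be preserved at each subgroup, so the map $P(K) \to P(H)$ is a direct summand of the injection $F(K) \to F(H)$ and is itself injective. In particular, $P(K) \neq 0$ forces $P(H) \neq 0$, finishing the pointwise claim.

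The main difficulty, which is really just the key observation, is recognizing that the surjectivity of the canonical $G$-map $\pi\colon G/H \to G/K$ translates, via the contravariant functoriality of $\RG_G$-modules, into injectivity of the induced maps on values of representable (and hence free) functors. Once this is in hand, the reduction from projectives to free modules via summands is formal, and the degree-by-degree application to a projective chain complex gives the lemma directly.
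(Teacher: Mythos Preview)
Your proof is correct, but it takes a different route from the paper's own argument. The paper invokes the decomposition theorem for projective $\RG_G$-modules (tom Dieck \cite[Chap.~I, Theorem 11.18]{tomDieck2}), writing each projective as $P\cong \bigoplus_H E_H P_H$ and then using the support properties of the extension functors $E_H$: if $\CC_n(K)\neq 0$ then some summand $E_H P_H$ with $(K)\leq (H)$ occurs, and such a summand is automatically nonzero at every $(L)\leq (K)$. You instead prove directly that for any projective $P$ and any $G$-map $\pi\colon G/H\to G/K$, the induced map $P(K)\to P(H)$ is injective, by reducing to the representable free case via the observation that precomposition with the surjection $\pi$ is injective on morphism sets. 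This is exactly the content of Lemma~\ref{lem:resmap}, which the paper states and proves \emph{after} Lemma~\ref{lem:monotone}. So your approach is more elementary---it avoids the structure theorem entirely---and in effect collapses Lemmas~\ref{lem:monotone} and~\ref{lem:resmap} into a single argument, at the minor cost of not exhibiting the finer splitting of projectives that the paper will use again later.

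One small point you leave implicit: the case where $H\notin\cF$ (so $(\Dim\CC)(H)=-1$ by convention). Here monotonicity is automatic, since closure of $\cF$ under subgroups and conjugation forces any $K$ with $(H)\leq (K)$ to also lie outside $\cF$, whence $(\Dim\CC)(K)=-1$ as well. The paper spells this out; you might add a sentence to the same effect.
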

      
\begin{definition} We say a  chain complex $\CC$ of $\RG_G$-modules 
is {\it tight at $H\in \cF$} if $$\Dim \CC (H)=\hdim \CC (H).$$  We call a chain complex of $\RG _G$-modules {\it tight} if it is tight at every $H\in \cF$. 
\end{definition}

We are particularly interested in chain complexes which have the homology of a sphere when evaluated at every $K \in \cF$. 
Let $\un{n}$ be a super class function 
\emph{supported 
on $\cF$}, meaning that $\un{n}(H) = -1$ for $H\notin \cF$, 
and let $\CC$ be a chain complex over $\RG_G$.
 We say that $\CC$ is
  an \emph{$R$-homology $\un {n}$-sphere}
  (see  \cite[Definition 2.7]{h-yalcin3})
 if  the reduced homology of $\CC (K)$ is the same as the reduced homology of an $\un{n}(K)$-sphere (with coefficients in $R$) for all $K \in \cF$. Here the \emph{reduced homology} is the homology of an augmented chain complex $\varepsilon\colon \CC \to \un{R}$, with $\varepsilon(H)$ surjective for all $H\in \cF$ such that $\CC(H) \neq 0$.

In \cite[II.10]{tomDieck2}, there is a list of properties that are satisfied by   
 $G$-homotopy representations. We will use algebraic versions of these properties to define an analogous notion for chain complexes.

\begin{definition}[{\cite[Definition 2.8]{h-yalcin3}}]
\label{def:algrep} Let $\CC$ be a finite projective  chain complex over $\RG_G$, which is an $R$-homology $\un{n}$-sphere.
We say $\CC$ is an \emph{algebraic homotopy representation} (over $R$) if 
\begin{enumerate}
\item The function $\un{n}$ is a monotone
function.
\item If $H,K \in \cF$ are such that $n=\un{n}(K)=\un{n}(H)$,
then for every $G$-map $f \colon  G/H \to G/K$ the induced map $\CC(f)\colon \CC (K) \to \CC (H)$ is an $R$-homology isomorphism.
\item Suppose $H, K, L\in \cF$ are such that $H \leq K,L$ and
let $M=\langle K, L \rangle$ be the subgroup of $G$ generated by $K$ and $L$. If
$n=\un{n}(H)=\un{n}(K)=\un{n}(L) >-1$, then $M \in \cF$ and
$n=\un{n}(M)$.
\end{enumerate}
\end{definition}

If a dimension function $\un{n}$ satisfies condition (iii) of Definition \ref{def:algrep}, then we say it has the  \emph{closure property}. Such dimension functions have an important maximality property. 

\begin{proposition}[{\cite[Proposition 2.9]{h-yalcin3}}]
 \label{cor:max} Let $\CC$ be a projective chain complex of $\RG_G$-modules, which is an $R$-homology $\un{n}$-sphere. %\remE{added} 
If $\un{n}$ satisfies the closure property, then the set of subgroups $\cF_H = \{ K \in \cF \vv (H) \leq (K),\ \un{n}(K) = \un{n}(H)> -1\}$ has a unique maximal element, up to conjugation. 
\end{proposition}

In the remainder of this section we will assume that $R$ is a principal ideal domain. The main examples for us are $R =\bZp$ or $R=\bZ$.

 \begin{theorem}[{\cite[Theorem A]{h-yalcin3}}]
Let $\CC$ be a finite free chain complex of $\RG_G$-modules which is an
$R$-homology
$\un{n}$-sphere. Then $\CC$ is chain homotopy equivalent to a finite
free chain complex $\DD$ which is tight if and only if $\CC$ is an algebraic homotopy representation. 
\end{theorem}

When these conditions hold for $R = \bbZ$, then we apply 
\cite[Theorem 8.10]{hpy1}, \cite{pamuks1}
 to obtain a geometric realization result.

\begin{theorem}[{\cite[Corollary B]{h-yalcin3}}]\label{thm:tightness} 
Let $\CC$ be a finite free chain complex of $\ZG_G$-modules which is a
homology $\un{n}$-sphere. If $\CC$ is an  algebraic homotopy representation,
and $\un{n}
(K) \geq 3$ for all $K \in \cF$, then there is a finite $G$-CW-complex $X$ such that $\uCZ{X}$ is chain homotopy equivalent to
$\CC$ as chain complexes of $\ZG_G$-modules. 
\end{theorem}

\begin{remark}\label{simplyconnected} 
The construction actually produces a finite $G$-CW-complex $X$ such that all the non-empty fixed sets $X^H$ are simply-connected, and with trivial action of $W_G(H) = N_G(H)/H$  on the homology of $X^H$. Therefore $X$ will be an  \emph{oriented}    $G$-homotopy representation (in the sense of tom Dieck). 
\end{remark}

\section{Construction of the preliminary local models}\label{sec:prime p}

Our main technical tool  is provided by Theorem \ref{thm:maintech}, which gives a method for constructing finite $G$-CW-complexes, with isotropy in a given family.  This theorem will be proved by applying the realization statement of Theorem \ref{thm:tightness}. To construct a suitable finite free chain complex $\CC$ over $\ZG_G$, we work one prime at a time to construct local models $\CC\up$, and then apply the glueing method for chain complexes developed in \cite[Theorem 6.7]{hpy1}.

The main input of Theorem \ref{thm:maintech}  is a compatible collection of
unitary representations for the $p$-subgroups of $G$. We give the precise definition in  a more general setting.

\begin{definition}\label{def:sylowrep} Let $\cF$ be a family of subgroups of $G$ and $n$ be a fixed integer.
We say that $\VV(\cF)$ is an \emph{$\cF$-representation} for $G$ of dimension $n$, if 
$\VV(\cF) = \{ V_H \in \Rep(H, U(n)) \vv H \in \cF\}$ is a compatible collection
 of (non-zero) 
unitary $H$-representations.
The collection is \emph{compatible} if $f^* (V_K) \cong V_H$ for every $G$-map $f\colon  G/H \to G/K$.  
\end{definition}

For any finite $G$-CW-complex $X$, we let $\Iso(X) = \{K \leq G\vv X^K \neq \emptyset\}$ denote the \emph{isotropy family} of the $G$-action on $X$. Note that this is the smallest family closed under conjugation and taking subgroups, which includes all the isotropy subgroups of $X$. This suggests the following notation:

\begin{definition} Let $\VV(\cF)$ be an $\cF$-representation for $G$. We let 
$$\Iso(\VV(\cF)) = \{ L \leq H \vv S(V_H)^L \neq \emptyset, \text{\ for some\ } V_H \in \VV(\cF)\}$$
denote the \emph{isotropy family} of $\VV(\cF)$. We note that $\Iso(\VV(\cF))$ is a sub-family of $\cF$. 
\end{definition}

\begin{example}\label{ex:pgroupfamily}
Our first example for these definitions will be a compatible collection of  representations for the family $\cF_p$ of all $p$-subgroups, with $p$ a fixed prime dividing the order of $G$. In this case, an $\cF_p$-representation $\VV(\cF_p)$ can be constructed from a suitable representation $V_p \in \Rep(P, U(n))$, where $P$ denotes a $p$-Sylow subgroup of $G$. The representations $V_H$ can be constructed for all $H \in \cF_p$, by extending $V_p$ to conjugate $p$-Sylow subgroups and by restriction to subgroups. To ensure a compatible collection $\{V_H\}$, we assume that $V_p$ \emph{respects fusion} in $G$, meaning that $\chi_p (gxg^{-1})=\chi_p(x)$ for the corresponding character $\chi_p$, whenever $gxg^{-1}\in P$ for some
$g\in G$ and $x\in P$.
\end{example}

We will now specify an isotropy family $\cJ$ that will be used in the rest of the paper.
\begin{definition}\label{def:allprimes}
Let $\{ \VV(\cF_p) \vv p \in \PG\}$ be a collection of $\cF_p$-representations, for a set $\PG$  of primes dividing the order of $G$. Let  $\cJ_p = \Iso(\VV(\cF_p))$ and  $\cJ = \bigcup\{ \cJ_p \vv p \in \PG\}$ denote the  isotropy families.
\end{definition}

We note that $\cJ$ contains no isotropy subgroups of composite order, since each $\cJ_p$ is a family of $p$-subgroups. 
 Let $\G_G=\Or _{\cJ} G$ and $\G_G(p)$ denote the orbit category $\Or _{\cJ_p} G$ over the family $\cJ_p$. A chain complex $\CC$ over $\RG_G(p)$ can always be considered as a complex of $\RG_G$-modules,  by taking the values $\CC(H)$ at subgroups  $H \not \in \cJ_p$ as zero complexes.

 In this section we construct a $p$-local chain complex $\CC\up(0)$ over $\RG_G(p)$, for $R = \bZp$, which we call a \emph{preliminary local model} (see Definition \ref{def:prelocal}).  From this construction we will obtain a dimension function $\un{n}\up \colon \cJ_p \to \bZ$. By taking joins we can assume that these dimension functions have common value at $H=1$. In the next section, these preliminary local models will be ``improved" at each prime $p$ by adding homology as specified by the dimension functions $\un{n}\uq\colon \cJ_q \to \bZ$, for all $q\in \PG$ with $q \neq p$.  The resulting complexes $\CC\up$ over the orbit category $\RG_G$ will all have the same dimension function 
$$\un{n} = \bigcup\, \{\un{n}\up \vv p \in \PG  \} \colon \cJ \to \bZ,$$ and satisfy conditions needed for the glueing method.

\begin{proposition}\label{pro:p-local sphere} Let $G$ be a finite group, and let  $\VV(\cF_p)$  be an $\cF_p$-representation for $G$ for some $p \in \PG$. Then there exists a   finite-dimensional $G$-CW-complex $\cZ$, with isotropy family equal to  $\cJ_p=\Iso(\VV(\cF_p))$, 
such that for each $H \in \cJ_p$ the fixed set $E^H$ is simply-connected and $p$-locally homotopy equivalent to a sphere $S(W)^H$, where $W = V_H^{\oplus k}$
 for some integer $k$ and for some $V_H \in \VV (\cF_p)$. 
\end{proposition}

\begin{proof}
We recall  a result of Jackowski, McClure and Oliver \cite[Proposition 2.2]{jackowski-mcclure-oliver1}: there exists a simply-connected, finite dimensional $G$-CW-complex $B$ which is $\bF_p$-acyclic and has finitely many orbit types with isotropy in the family of $p$-subgroups $\cF_p$ in $G$. The quoted result applies more generally to all compact Lie groups and produces a complex with $p$-toral isotropy (meaning a compact Lie group $P$ whose identity component $P_0$ is a torus, and $P/P_0$ is a finite $p$-group). For $G$ finite, the $p$-toral subgroups are just the $p$-subgroups. 
A direct construction for $B$ can also be given using \cite[Corollary 3.15, Theorem 8.10]{hpy1} to ensure that all the fixed sets $B^H$  have finitely-generated $\bZp$-homology.  The property that all the fixed sets are simply-connected is established in the proof.

We now apply \cite[Proposition 4.3]{unlu-yalcin3} to this $G$-CW-complex $B$ and to the given $\cF_p$-representation $\VV(\cF_p)$, to obtain a $G$-equivariant spherical fibration $\cZ \to B$ with fiber type $S(\VV (\cF_p)^{\oplus k})$ for some $k$, such that $\cZ$ is finite dimensional (see \cite[Section 2]{unlu-yalcin3} for necessary definitions). The resulting $G$-CW-complex $\cZ$ has the required properties. In particular, since $B$ is $\bF_p$-acyclic then for each $p$-subgroup $H$, the fixed point set $B^H$ will be also $\bF_p$-acyclic (and $B^H\neq\emptyset$). This means that the (extended) isotropy family of $\cZ$ is $\cJ _p=\Iso (\VV (\cF _p ))$ and for every $H \in \cJ_p$, the mod-$p$ homology of $E^H$ is isomorphic to the mod-$p$ homology of $S(V_H^{\oplus k})^H$ for some $k$. By taking further fiber joins if necessary, we can assume that $E^H$ is simply connected for all $H\in \cJ_p$. Hence $\cZ^H$ 
is  $p$-locally homotopy equivalent to a sphere.   
\end{proof}

We now let $R = \bZp$, and consider the finite dimensional chain complex $\uC{\cZ}$ of free $\RG_G(p)$-modules. By taking joins, we may assume that this complex has ``homology gaps" of length $> l(\G_G)$ as required for \cite[Theorem 6.7]{hpy1}, and that all the non-empty fixed sets  $\cZ^H$ have $\un{n}(H) \geq 3$ and trivial action of $W_G(H)$ on homology. Let $\un{n}\up \colon  \cJ_p \to \bZ$ denote the dimension function $\HomDim \uC{\cZ}$.
 
The following result applies to chain complexes over $\RG_G$ with respect to any family $\cF$ of subgroups. 

\begin{lemma}\label{lem:dold}
Let $R$ be a noetherian ring and $G$ be a finite group.  Suppose that $\CC$ is an $n$-dimensional chain complex of projective $\RG_G$-modules with finitely generated homology groups. Then $\CC$ is chain homotopy equivalent to a finitely-generated projective $n$-dimensional chain complex 
over $\RG_G$.
\end{lemma}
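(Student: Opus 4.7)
My plan is to adapt the classical Dold--Wall argument for replacing a bounded complex of projectives with finitely generated homology by a quasi-isomorphic complex of finitely generated projectives, now in the orbit category setting. The goal is to construct a finitely generated projective chain complex $\DD$ of dimension at most $n$ together with a chain map $f\colon \DD \to \CC$ inducing an isomorphism on homology; since both $\DD$ and $\CC$ are bounded-below complexes of projective $\RG_G$-modules, any such homology isomorphism will automatically be a chain homotopy equivalence by the standard Dold-type theorem for chain complexes of projectives.

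The key algebraic input is that noetherianness transfers to the orbit category. Since $R$ is noetherian, $G$ is finite (so $\G_G = \OrG$ has finitely many isomorphism classes of objects), and each endomorphism ring $R[W_G(H)]$ is a finite group ring over a noetherian ring, it follows that every finitely generated $\RG_G$-module is noetherian; in particular, submodules of finitely generated $\RG_G$-modules are finitely generated. With this in place, I would build $\DD$ inductively, degree by degree. Take $\DD_0$ to be a finitely generated free module surjecting onto $H_0(\CC)$ and lift to $\CC_0$ via projectivity. At stage $i$, having arranged that $f$ induces an isomorphism on $H_j$ for $j<i-1$ and a surjection on $H_{i-1}$, I would choose $\DD_i$ as a direct sum of two finitely generated free modules: one mapping by $\partial$ onto cycle representatives of the kernel $\ker(H_{i-1}(\DD_{\leq i-1}) \to H_{i-1}(\CC))$ (finitely generated by noetherianness), and one mapping by cycles onto generators of the finitely generated module $H_i(\CC)$; the chain map $f$ is then extended by lifting along $\partial_{\CC}$ using projectivity.

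The principal obstacle is keeping the dimension of $\DD$ bounded by $n$. A naive implementation of the induction may introduce a $\DD_{n+1}$ to kill the residual kernel on $H_n$. To bring the dimension back down I would apply a truncation argument: in the complex $\DD$ of dimension $n+1$ so produced, the top differential $\partial\colon \DD_{n+1}\to \DD_n$ is injective (since $H_{n+1}(\DD)=H_{n+1}(\CC)=0$) and the component $f_{n+1}\colon \DD_{n+1} \to \CC_{n+1} = 0$ vanishes; hence $\DD$ is quasi-isomorphic to the truncation obtained by replacing $\DD_n$ with $\DD_n/\partial(\DD_{n+1})$ and deleting $\DD_{n+1}$. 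The delicate point---and the technically hardest step---is to ensure that this quotient is itself projective, which can be achieved by a stabilization argument using the decomposition theorem for projective $\RG_G$-modules (\cite[Chap.~I, Theorem~11.18]{tomDieck2}) to realize $\DD_{n+1}$ as a direct summand of $\DD_n$ after possibly enlarging both by a common finitely generated projective summand.
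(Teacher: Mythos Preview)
Your inductive ``cycle-killing'' strategy is a legitimate alternative to the paper's Dold--Postnikov approach, and the noetherianness of $\RG_G$ is exactly the right input for finite generation at each stage. However, the dimension-bounding step contains a genuine gap.

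First, your claim that $\partial_{n+1}\colon \DD_{n+1}\to \DD_n$ is injective is circular. You justify it by ``$H_{n+1}(\DD)=H_{n+1}(\CC)=0$'', but $H_{n+1}(\DD)=H_{n+1}(\CC)$ only follows once $f$ is known to be a quasi-isomorphism, and that is precisely what fails if $H_{n+1}(\DD)\neq 0$. Concretely: you chose $\DD_{n+1}$ free, mapping onto generators of the kernel $K\subset Z_n(\DD)$; the relations among those generators give $\ker(\partial_{n+1})$, which is nonzero whenever $K$ is not free. There is no reason for $K$ (a submodule of cycles in a free module) to be projective over $\RG_G$.

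Second, even granting injectivity, your ``stabilization via the decomposition theorem'' does not make $\DD_n/\partial(\DD_{n+1})$ projective. The splitting $P\cong\bigoplus_H E_HP_H$ says nothing about an arbitrary injection of projectives becoming split after adding a common summand; over $\bZ$ the map $2\colon\bZ\to\bZ$ is a counterexample to any such principle.

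The missing ingredient is cohomological: since $\CC$ is an $n$-dimensional projective complex, $H^{k}(\CC;M)=0$ for all $k>n$ and all $\RG_G$-modules $M$. Once you have \emph{any} finitely generated projective complex $\DD'$ chain homotopy equivalent to $\CC$ (even of dimension much larger than $n$), the vanishing $H^{k}(\DD';M)=0$ for $k>n$ lets you split off the top module inductively: taking $M=\DD'_m$ at the top degree $m>n$ shows $\id_{\DD'_m}$ factors through $\partial_m^*$, so $\partial_m$ is split injective and the quotient is projective. This is the content of \cite[Prop.~11.10]{lueck3}. The paper reaches this point via Dold's algebraic Postnikov tower, building $\CC(n)$ from finitely generated projective resolutions $\PP(H_i)$ of the homology modules, and then invokes the same trimming lemma. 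Your approach would work equally well if you continued the inductive construction of $\DD$ past degree $n+1$ (the kernels stay finitely generated by noetherianness) and then applied this cohomological trimming rather than the unjustified stabilization.
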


\begin{proof} 
Note that the chain modules of $\CC$ are not assumed to be finitely-generated, but $H_i(\CC) = 0$ for $i > n$.
We first apply Dold's ``algebraic Postnikov system" technique  \cite[\S 7]{dold1}, to chain complexes over the orbit category (see \cite[\S 6]{hpy1}). 

According to this theory, given a positive projective chain complex $\CC$, there is a sequence of positive projective
chain complexes $\CC(i)$ indexed by positive integers such that $f\colon  \CC\rightarrow \CC(i)$ induces a homology isomorphism for
dimensions  $\leq i$. Moreover, there is a tower of
maps $$\xymatrix{&\CC(i)\ar[d]&\\
&\CC(i-1)\ar@{-->}[d]\ar[r]^{\alpha_{i}\ \ }&\Sigma ^{i+1}\PP (H_i)\\
\CC\ar[dr]\ar[ur]\ar[uur]\ar[r]&\CC(1)\ar[d]\ar[r]^{\alpha_2}&\Sigma ^3\PP (H_2)\\
&\CC(0)\ar[r]^{\alpha_1}&\Sigma ^2\PP (H_1)}$$ 
such that
$\CC(i)=\Sigma ^{-1} \CC(\alpha_i)$, where $\CC(\alpha_i)$ denotes
the algebraic mapping cone of $\alpha_i$, and $\PP(H_i)$ denotes a projective resolution of the homology module $H_i=H_i(\CC)$. 

By assumption, since the homology modules $H_i$ are finitely-generated and $R$ is noetherian, we can choose the projective resolutions $\PP(H_i)$ to be finitely-generated in each degree. Therefore, at each step the chain complex $\CC(i)$ consists of finitely-generated projective $\RG_G$-modules, and $\CC(n) \simeq \CC$ has homological dimension $\leq n$. 
  Now, since $H^{n+1}(\CC(n); M) = H^{n+1}(\CC; M) = 0$, 
 for  any $\RG_G$-module $M$, we conclude that $\CC(n)$ is chain homotopy equivalent to an $n$-dimensional finitely-generated projective chain complex by \cite[Prop.~11.10]{lueck3}. 
\end{proof}

\begin{remark} See \cite[11.31:ex.~2]{lueck3} or \cite[Satz 9]{tomDieck1981} for related background and previous results.
\end{remark}
Recall that  a dimension function $\un{n}$ has the  \emph{closure property} if it  satisfies condition (iii) of Definition \ref{def:algrep}. 

\begin{lemma}\label{lem:modelp}  If $\un{n}\up$ has the closure property,  then the chain complex $\uC{\cZ}$ is chain homotopy equivalent to an oriented $R$-homology $\un{n}\up$-sphere $\CC\up (0)$, which is an algebraic homotopy representation. 
\end{lemma}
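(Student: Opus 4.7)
The plan is to refine $\uC{\cZ}$ to a finitely-generated projective chain complex $\CC\up(0)$ over $R\G_G(p)$ which is an oriented algebraic homotopy representation, chain homotopy equivalent to $\uC{\cZ}$. I will proceed in two stages: first produce the projective model, then verify the axioms.

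First I would apply Lemma \ref{lem:dold} to $\uC{\cZ}$. Since $R = \bZp$ is noetherian and each homology group $H_i(\cZ^H; R)$ is finitely generated (being the homology of a $\bZp$-homology sphere by Corollary \ref{cor:p-local sphere}), the lemma yields a finitely-generated projective chain complex $\CC\up(0)$ over $R\G_G(p)$, chain homotopy equivalent to $\uC{\cZ}$. I equip it with the augmentation inherited from the natural augmentation on $\uC{\cZ}$. For each $H \in \cJ_p$, the augmented complex evaluated at $H$ then has reduced $R$-homology concentrated in degree $\un{n}\up(H) = \HomDim \uC{\cZ}(H)$ with value $R$, so $\CC\up(0)$ is an $R$-homology $\un{n}\up$-sphere. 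Orientation follows because the preparatory join step ensures that $W_G(H) = N_G(H)/H$ acts trivially on $H_*(\cZ^H; R)$.

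Next I would verify conditions (i), (ii), (iii) of Definition \ref{def:algrep}. Since these conditions depend only on the dimension function $\un{n}\up$, on induced maps between reduced homology groups, and on subgroup-lattice properties, they are chain homotopy invariants; it therefore suffices to verify them for $\uC{\cZ}$ directly using the geometry of the $G$-fibration $\cZ \to B$ from the proof of Corollary \ref{cor:p-local sphere}. Condition (i) would follow from the containments $V_{G_\sigma}^K \subseteq V_{G_\sigma}^H$ and $B^K \subseteq B^H$ for $(H) \leq (K)$, giving $\un{n}\up(K) \leq \un{n}\up(H)$. For (ii) and (iii), the key geometric fact is that each $\cZ^H$ is a $\bZp$-homology sphere of dimension $\un{n}\up(H)$, so an inclusion $\cZ^K \hookrightarrow \cZ^H$ between same-dimensional $\bZp$-homology spheres induces an iso on top $\bZp$-homology, and the intersection $\cZ^K \cap \cZ^L = \cZ^{\la K,L\ra}$ is top-dimensional in $\cZ^H$ whenever all three fixed sets share the same dimension.

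The main obstacle I expect is verifying (ii) and (iii) without the tightness hypothesis $\Dim \CC = \un{n}\up$ that underlies Proposition \ref{prop:tightness conditions}. I would adapt the Mayer-Vietoris argument of that proof so that the required vanishing of certain homology groups comes from the $\bZp$-homology-sphere property of each $\cZ^H$ itself, rather than from a chain-level dimension bound. This introduces no essentially new ideas; it only requires carefully tracking which homology groups are non-zero at each stage, which is controlled by the sphere data provided by Corollary \ref{cor:p-local sphere}.
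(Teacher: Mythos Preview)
Your overall plan coincides with the paper's: apply Lemma~\ref{lem:dold} to replace $\uC{\cZ}$ by a finite projective complex $\CC\up(0)$, note that conditions (i)--(iii) of Definition~\ref{def:algrep} are chain-homotopy invariants, and check them for $\uC{\cZ}$ geometrically. The paper's own proof is just as terse on that last step.

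There is, however, a gap in your proposed verification of~(ii). You say the ``required vanishing of certain homology groups comes from the $\bZp$-homology-sphere property of each $\cZ^H$ itself, rather than from a chain-level dimension bound.'' That is not enough: from the long exact sequence of the pair $(\cZ^H,\cZ^K)$ one obtains
\[
0\to H_{n+1}(\cZ^H,\cZ^K;R)\to H_n(\cZ^K;R)\to H_n(\cZ^H;R)\to H_n(\cZ^H,\cZ^K;R)\to 0,
\]
and the sphere hypotheses alone give no control over the outer relative groups --- an inclusion of one $\bZp$-homology $n$-sphere into another can perfectly well be zero on $H_n$. In Proposition~\ref{prop:tightness conditions} that vanishing came precisely from $\dim\DD\leq n$, which is exactly the tightness you no longer have. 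What actually works is the fibration structure you invoke earlier but then set aside: when $\un{n}\up(H)=\un{n}\up(K)$ the linear fixed subspaces coincide, $(V^{\oplus k})^K=(V^{\oplus k})^H$, so $\cZ^K\hookrightarrow\cZ^H$ is a map of sphere fibrations with the \emph{same} fibre over the inclusion $B^K\hookrightarrow B^H$ of simply-connected $\bZp$-acyclic bases; a Serre spectral sequence comparison (or restriction to a common fibre) shows it is a $\bZp$-homology isomorphism. With (ii) in hand this way, your Mayer--Vietoris plan for (iii) then goes through as in Proposition~\ref{prop:tightness conditions}, via Lemma~\ref{lem:intersection} and the equality $\cZ^K\cap\cZ^L=\cZ^{\langle K,L\rangle}$. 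So lean on the fibration and the linear representation data for (ii), not on an abstract homology-sphere argument.
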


\begin{proof} The chain complex $\uC{\cZ}$ is finite dimensional and free over $\RG_G$, but may not be finitely-generated. However, by the conclusion of Proposition \ref{pro:p-local sphere}, the homology groups $H_*(\uC{\cZ})$ are finitely generated since $\uC{\cZ}$ is an $R$-homology $\un{n}$-sphere. The result now follows from Lemma \ref{lem:dold}, which produces a finite length projective chain complex $\CC\up(0)$ of finitely-generated $\RG_G(p)$-modules. Note that $\uC{\cZ}$ satisfies the conditions (i)-(iii) in Definition \ref{def:algrep}, so $\CC\up (0)$  also satisfies these conditions (which are chain homotopy invariant), hence $\CC \up (0)$ is an algebraic homotopy representation. 
\end{proof}

Note that if $\un{n}\up$ satisfies the closure property, 
then $\CC\up (0)$ is an algebraic homotopy representation, meaning that it satisfies the condition (i), (ii), and (iii) in Definition \ref{def:algrep}, even though $\Dim \CC\up (0) $ 
may not be equal to $\un{n}\up  = \HomDim \CC\up (0)$. 

By taking joins, we may assume that there exists a common dimension 
$N = \un{n}\up(1)$, at $H=1$, for all $p \in \PG$. Moreover, we may assume that $N+1$ is a multiple of any given integer $m_G$ (to be chosen below). We now obtain the ``global" dimension function 
$$\un{n} = \bigcup\, \{\un{n}\up \vv p \in \PG \} \colon \cJ \to \bZ,$$
 where $\un{n}\up = \HomDim \CC\up(0)$, for all $p \in \PG$, and $\un{n}(1) = N$.

\begin{definition}[\emph{Preliminary local models}]\label{def:prelocal}  Let $\PG = \{ p \vv \rk_p G \geq 2\}$, and let $m_G$ denote the least common multiple of the $q$-periods for $G$  (as defined in \cite[p.~267]{swan1}), over all primes $q$ for which $\rk_q G = 1$. We assume that $\un{n}(1)+1$ is a multiple of $m_G$.
\begin{enumerate}
\item We will take the chain complex $\CC\up(0)$ constructed in Lemma \ref{lem:modelp} for our preliminary model at each prime $p \in \PG$.
\item If $\rk_q G = 1$, we take $\cJ_q = \{1\}$ and  $\CC\uq (0)$ as the $\RG_G$-chain complex $E_1 \PP$ where $\PP$ is a periodic resolution of $R$ as a $RG$-module with period $\un{n} (1)+1$ (for more details, see the proof of Theorem \ref{thm:q-local sphere} below, or \cite[Section 9B]{hpy1}).
\end{enumerate}
\end{definition}

This completes the construction of the preliminary local models at each prime dividing the order of $G$,
for a given family of $\cF_p$-representations.
In the next section we will modify these preliminary models to get $p$-local chain complexes $\CC\up$ over 
$\RG_G$ which are $R$-homology $\un{n}$-spheres for the dimension function $\un{n}$ described above.

\begin{example}\label{ex:peffective}
In the proof of Theorem A we will be using the setting of Example \ref{ex:pgroupfamily}. Suppose that $G$ is a rank two finite group which does not $p'$-involve $\Qd(p)$, for any odd prime $p$. We let $\PG$ be the set of primes $p$ where $\rk_p G = 2$. Under this condition, a result of Jackson \cite[Theorem 47]{jackson1} asserts that $G$ admits a $p$-\emph{effective} character $V_p$. 
Recall that ``$p$-effective" 
means that the restriction $\Res_E V_p$  to a rank two elementary abelian $p$-subgroup 
$E$  has no trivial summand. 
This guarantees that the set of isotropy subgroups $\cJ_p =\Iso (S(V_p ))$ consists of the rank one $p$-subgroups. In this setting, our preliminary local models  arise from the following special case when $p$ is odd: 

\begin{corollary}\label{cor:p-local sphereA} Let $p$ be an odd prime and $G$ be a finite rank two group with $\rk_p G = 2$. If $G$ does not $p'$-involve $\Qd(p)$, then there exists a simply-connected,  finite-dimensional $G$-CW-complex $\cZ$ with rank one $p$-group isotropy, which is $p$-locally homotopy equivalent to a sphere. 
\end{corollary}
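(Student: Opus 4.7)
The plan is to obtain this corollary as a direct specialization of Corollary \ref{cor:p-local sphere}, using Jackson's $p$-effective character theorem to produce the required input $\cF_p$-representation. The main work is showing that the hypothesis on $\Qd(p)$ delivers an $\cF_p$-representation whose isotropy consists only of rank one $p$-subgroups.

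First, I would invoke Jackson's theorem \cite[Theorem 47]{jackson1}, which applies precisely when $G$ does not $p'$-involve $\Qd(p)$ (and trivially when $p=2$, by the standing exception). This produces a $p$-local virtual character $\chi_p$ on a Sylow $p$-subgroup $P \leq G$ that (a) respects fusion in $G$ and (b) is $p$-\emph{effective}, meaning that its restriction to any rank two elementary abelian subgroup $E \leq P$ contains no trivial summand. By clearing denominators and adding a multiple of the regular representation if necessary, I may assume $\chi_p$ is the character of an honest unitary representation $V_p \in \Rep(P, U(n))$ that still respects fusion and is still $p$-effective.

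Next, following Example \ref{ex:pgroupfamily}, I extend $V_p$ to a compatible collection $\VV(\cF_p) = \{V_H\}_{H \in \cF_p}$ by conjugating to all other Sylow $p$-subgroups and restricting to arbitrary $p$-subgroups; the fusion-respecting property is exactly what is needed for compatibility. The key observation is that the $p$-effectiveness of $V_p$ forces $S(V_E)^E = \emptyset$ for every rank two elementary abelian $p$-subgroup $E \leq G$, and hence $S(V_H)^H = \emptyset$ for every $H \leq G$ of $p$-rank two (since such $H$ contains a rank two elementary abelian subgroup $E$, and $S(V_H)^H \subseteq S(V_E)^E$). Since $\rk_p G = 2$, this means $\cJ_p = \Iso(\VV(\cF_p))$ consists only of $p$-subgroups of rank at most one.

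Finally, I apply Corollary \ref{cor:p-local sphere} to the $\cF_p$-representation $\VV(\cF_p)$ constructed above. That corollary provides a simply-connected, finite-dimensional $G$-CW-complex $\cZ$ with isotropy $\cJ_p$ which is $p$-locally homotopy equivalent to a sphere, and by the previous paragraph $\cJ_p$ consists of rank one $p$-subgroups. The only delicate point is verifying $p$-effectiveness of $V_p$ passes cleanly through the construction of $\VV(\cF_p)$; this is really just the observation that fixed points of restrictions are fixed points of the original representation, so no substantial obstacle arises. The rest is a direct appeal to the already-established Corollary \ref{cor:p-local sphere}.
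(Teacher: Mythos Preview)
Your proposal is correct and follows essentially the same approach as the paper: the corollary is stated within Example~\ref{ex:peffective}, where the paper invokes Jackson's theorem to produce a $p$-effective, fusion-respecting representation $V_p$, observes that $p$-effectiveness forces $\cJ_p = \Iso(S(V_p))$ to consist of rank one $p$-subgroups, and then appeals directly to Corollary~\ref{cor:p-local sphere}. Your additional remarks (promoting the virtual character to an honest representation, and the inclusion $S(V_H)^H \subseteq S(V_E)^E$) make explicit details the paper leaves implicit, but the argument is the same.
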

\end{example}

Note that when $G$ is a $p$-group of rank two, then it has a central element $c$ of order $p$ in $G$. Using the subgroup generated by $c$, we can define the induced representation $V=\Ind _{\la c\ra } ^G \chi$ where $\chi$ is a nontrivial one dimensional complex representation of $\la c\ra$. Then, the $G$-action on $S(V)$ will satisfy the conclusion of the above corollary. It is proved by Dotzel-Hamrick \cite{dotzel-hamrick1} that all $p$-group actions on mod-$p$ homology spheres resemble linear actions on spheres.

\section{Construction of the local models: adding homology}
\label{sec:other primes}

Let $G$ be a finite group and let $\PG = \{ p \vv \rk_p G \geq 2\}$. 
 We recall the notation $\cJ_p = \Iso(\VV(\cF_p))$, for $p \in \PG$,  from  Definition \ref{def:allprimes}.  
 For $p \not \in \PG$ set 
  $\cJ_p=\{1\}$.  We will continue to work over the orbit category $\G_G = \Or_{\cJ}G$ where $\cJ = \bigcup\{ \cJ_p \vv p \in \PG\}$, or over its full subcategory $\G_G(p)$ with respect to the family $\cJ_p$.  
  For each prime $p$ dividing the order of $G$, let $\CC\up (0)$ denote the preliminary $p$-local model given in Definition \ref{def:prelocal},
and denote the homological dimension function of $\CC\up (0)$ by $\un{n}\up\colon \cJ_p \to \bZ$ for all primes dividing the order of $G$. In order to carry out this construction, we need to assume that each dimension function $\un{n}\up$ has the closure property. 
 
We now fix a prime $q$ dividing the order of $G$, 
and let $R = \bZq$. In Theorem \ref{thm:q-local sphere}, we will show how to add homology to the preliminary local model $\CC\uq(0)$, to obtain  
an algebraic homotopy representation  with dimension function $\un{n}\up \cup \un{n}\uq$ for any prime $p \in \PG$ such that $p\neq q$.
After finitely many such steps, we will obtain our local model $\CC\uq$ over $\RG_G$ with dimension function  
$$\HomDim \CC\uq = \un{n} =  \bigcup\, \{\un{n}\up \vv p \in \PG \}. 
$$
  The main result of this section is the following:

\begin{theorem}\label{thm:q-local sphere} Let $G$ be a finite group  and let $R = \bZq$. Suppose that $\CC$ is an algebraic homotopy representation over $R$, such that:
\begin{enumerate}
\item  $\CC$ is  an (oriented)  $R$-homology $\un{n}\uq$-sphere of projective $\RG_G(q)$-modules; 
\item If $1 \neq H \in \cJ_p$, then  $\rk_q(N_G(H)/H) \leq 1$, for every prime $p\neq q$;
\item The dimension function $\un{n}$ has  the closure property. 
\end{enumerate}  
Then there exists an  algebraic homotopy representation $\CC\uq$ over $R$,  which is an (oriented) $R$-homology $\un{n}$-sphere over $\RG_G$.
\end{theorem}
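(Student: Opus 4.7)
My plan is to prove Theorem \ref{thm:q-local sphere} by a finite induction over the conjugacy classes of subgroups in $\cJ \setminus \cJ_q$, enlarging $\CC$ one orbit type at a time so that the homology at each newly included class becomes that of an oriented $R$-homology $\un{n}(H)$-sphere. Because distinct families $\cJ_p$ (for primes $p \in \cS_G$) intersect only at the trivial subgroup, contributions at different primes can be organized independently; the principal coherence constraint arises at the trivial subgroup, where $\CC$ already supplies a value that must match the restrictions of the newly installed pieces.

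The basic building block is constructed as follows. For $1 \neq H \in \cJ_p$ with $p \neq q$, the hypothesis $\rk_q W_G(H) \leq 1$ combined with Swan's theorem \cite{swan1} gives that $W_G(H)$ has periodic $q$-cohomology. By our arrangement of $\un{n}$ (joined up so that $\un{n}(H)+1$ is a multiple of the $q$-period of $W_G(H)$), there is a finite projective $RW_G(H)$-chain complex $\PP_H$ which is an oriented $R$-homology $\un{n}(H)$-sphere. I then apply the extension functor $E_H$ (see \cite[\S I.11]{tomDieck2}, \cite[\S 9]{lueck3}) to obtain a projective $\RG_G$-chain complex $E_H \PP_H$ concentrated at isotropy type $(H)$, and graft it onto the evolving complex via an algebraic mapping cone. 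The gluing chain map is chosen so that the sphere class is correctly installed at $H$, while the extra projective summands that $E_H$ contributes at proper subconjugate orbit types are absorbed (up to homotopy) by a splitting argument of the type used in Lemma \ref{lem:splitting} and in the tightness construction of Section \ref{sect:realization}; here we use that $R = \bZq$ is a PID and that the subconjugate summands pair with $R$-torsion-free cokernels. Proceeding through all conjugacy classes yields the desired complex $\CC\uq$.

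The final check is that $\CC\uq$ is an algebraic homotopy representation in the sense of Definition \ref{def:algrep}. Monotonicity (i) is immediate from the chosen $\un{n}$, and orientation persists at each step because each $\PP_H$ carries a trivial $W_G(H)$-action on its top homology. The maximality condition (iii) is inherited: it holds for $\CC$ on $\cJ_q$ by hypothesis, holds on each $\cJ_p$ as a consequence of the $\cF_p$-representation structure that produced the target dimension function, and extends globally because the families $\cJ_p$ meet only at the trivial subgroup. The main obstacle, and the step that I expect to be the most delicate, is verifying the restriction condition (ii) for $\CC\uq$: for every $G$-map $f\colon G/H \to G/K$ with $\un{n}(H) = \un{n}(K)$, the induced map $\CC\uq(f)$ must be an $R$-homology isomorphism. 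This reduces to checking that the fundamental classes at $H$ and $K$ both pull back, via the canonical orbit-category maps, from the fundamental class of a single periodic $RW_G(M)$-resolution at the maximal element $M$ of the same-dimension fibre guaranteed by Corollary \ref{cor:max}; the argument then runs along the same Mayer--Vietoris lines as the proof of Lemma \ref{lem:tightness}.
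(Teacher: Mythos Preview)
Your overall strategy---Swan periodicity plus extension functors plus gluing---matches the paper's, but the implementation has a genuine gap in how the induction is organized.

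You propose to add $E_H\PP_H$ for \emph{every} nontrivial $H \in \cJ_p$. This over-counts homology: if $H < K$ with $\un{n}(H) = \un{n}(K)$, then $E_K\PP_K$ evaluated at $H$ already contributes a copy of $R$ in degree $\un{n}(H)$ (in fact one copy for each $W_G(K)$-orbit in $(G/K)^H$), and adding a further $E_H\PP_H$ piles another copy on top. Your ``absorption by a splitting argument of the type in Lemma~\ref{lem:splitting}'' cannot repair this: that lemma concerns isotypic free modules of a \emph{fixed} type with torsion-free cokernel, and it lets you cancel contractible summands, not genuine homology classes. The subconjugate values of $E_K\PP_K$ carry homology, not just stably-free noise.

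The paper's fix is to induct over the dimension \emph{levels} $n_1 > \cdots > n_s$ of $\un{n}\!\restriction_{\cJ_p}$ and, at level $n_{k+1}$, add $E_K\PP$ only for the \emph{maximal} conjugacy classes $K$ in that level (these exist and are unique in each component by Corollary~\ref{cor:max}). The key computation---which you flag as the delicate point but do not carry out---is that the quotient $I_{k+1}\PP := E_{k+1}\PP / \NN$ (where $\NN$ is the restriction of $E_{k+1}\PP$ to the already-handled levels $\cF_{\leq k}$) has homology exactly the module $J_{k+1}$: a single copy of $R$ at each $H$ in level $k+1$, with \emph{identity} restriction maps. This uses condition~(iii) directly to show that $(G/K)^H$ is a single $W_G(K)$-orbit whenever $H \leq K$ are both in $\cF_{k+1}$ with $K$ maximal. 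With this lemma in hand, condition~(ii) is built into the construction rather than verified afterwards. There is one further technical wrinkle you do not mention: neither $\NN$ nor $I_{k+1}\PP$ is projective, so before forming the pushout one must replace the short exact sequence $0 \to \NN \to E_{k+1}\PP \to I_{k+1}\PP \to 0$ by a weakly equivalent sequence of finite projective complexes; this uses that $R[(G/K)^H]$ is $RW_G(H)$-projective because $K$ is a $p$-group and $R = \bZq$ with $q \neq p$.
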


\begin{remark}\label{rem:necessity}
 Note that if there exists a $q$-local model $\CC\uq$ with isotropy in $\cJ_p \cup \cJ_q$, where $p \in \PG$, then for every $p$-subgroup $1 \neq H\in \cJ_p$, the $RN_G(H)/H$ complex $\CC\uq(H)$ is a finite length chain complex of finitely generated  modules which has the $R$-homology of an $\un{n}(H)$-sphere. 
 
 Since $R=\bZq$,  if we take a $q$-subgroup  $Q \leq N_G(H)/H$  with $H \neq 1$, and restrict  $\CC\uq(H)$ to $Q$, we obtain a finite dimensional \emph{projective} $RQ$-complex (see \cite[Lemma 3.6]{hpy1}). 
 This means $Q$ has periodic group cohomology and therefore it is a rank one subgroup. So, the condition (ii) in  Theorem \ref{thm:q-local sphere} is a necessary condition.
\end{remark} 

In order to carry out the construction in Theorem \ref{thm:q-local sphere},
  we also assume that  $\un{n}(H)+1$ is  a  multiple of the $q$-period of $W_G(H)$, for every $1 \neq H \in \cJ_p$, and that the gaps between non-zero homology dimensions are large enough:  more precisely,  for all $K, L\in \cJ$ with $\un{n}(K)> \un{n}(L)$, we have $\un{n}(K)-\un{n}(L)\geq l(\G _G)$,  where $l(\G_G)$ denotes the length of the longest chain of maps in the category $\G_G$.
We can easily guarantee both of these conditions  by taking joins of the preliminary local models we have constructed.

\begin{proof}[The proof of Theorem \ref{thm:q-local sphere}]
We obtain the complex $\CC ^{(q)}$ by adding homology specified by the dimension function $\un{n}\up$ step-by-step for each prime $p \in \PG$ with $p\neq q$. Let $p$ be a fixed prime with $p\neq q$. Assume that we have already added homology to the preliminary model and obtained a complex $\CC$ such that  $$\HomDim \CC = \un{n}^{(q)}\cup \bigcup\, \{\un{n}^{(r)} \vv r<p \text{ and } r \in \PG\}.$$ Now we will add more homology to  $\CC$ specified by the dimension function $\un{n}\up$ at the prime $p$. We will add these homologies 
by an inductive construction using the number of nonzero homology dimensions. Here is an outline of  the  argument: 

\begin{enumerate}
\item 
The starting point of the induction is the given complex $\CC$. Let $n_1> n_2>  \dots > n_s$ denote the set of dimensions $\un{n} (H)$, over all $H \in \cJ_p$. 
Note that, since the dimension function $\un{n}$ comes from a unitary representation, we have $n_s\geq 1$. 
 Let us denote by $\cF_i$, the collection of subgroups $1 \neq H\in \cJ_p$ such that $\un{n} (H)=n_i$. 

\item
Suppose that we have already added some homology to the given complex so that at this stage we have a finite projective chain complex $\CC$ over $\RG_G$, satisfying the conditions (i)-(iii) of Definition \ref{def:algrep}, which has the property that $\HomDim \CC (H)=\un{n}(H)$ for all $H \in \cF _{\leq k}$ where $\cF _{\leq k}=\bigcup _{i\leq k} \cF _i$. 
Our goal is to construct a new finite dimensional projective complex $\DD$ which also satisfies the conditions (i)-(iii) of Definition \ref{def:algrep}, and has the property that $\HomDim \DD (H)=\un{n}(H)$ for all $H \in \cF_{i}$ with $i \leq k+1$. 

\item
We will construct the complex $\DD$ as an extension of $\CC$ by a finite projective chain complex, whose homology is isomorphic to the homology that we need to add. Note that since the constructed chain complex $\DD$ must satisfy the conditions (i)-(iii), the homology we need to add should satisfy the condition that for every $H \leq K$ with $H, K \in \cF_{k+1}$, the restriction map on the added homology module is an $R$-homology isomorphism. 
\end{enumerate}

We will now begin the actual argument with the following useful notation.

\begin{definition}
Let $J_i$ denote the $\RG_G$-module which has the values $J_i (H)=R$ for all $H \in \cF_i$, and otherwise $J_i (H)=0$. The restriction maps  $r_H^K\colon J_i (K)\to J_i (H)$ for every $H, K \in \cF_i$ such that $H \leq K$, and the conjugation maps $c^g: J_i (K) \to J_i (\leftexp{g}{K}) $ for every $K\in \cF$ and $g\in G$, are assumed to be the identity maps (see \cite[\S 2]{h-yalcin3} for more details on these maps). 
\end{definition} 

In this notation,  the chain complex $\DD$ must have homology isomorphic to $J_{i}$ in dimension $n_i$ for all $i\leq k+1$, and in dimension zero the homology of $\DD$ should be isomorphic to $\un{R}$ restricted to $\cF_{k+1}$. It is in general a difficult problem to find projective chain complexes whose homology is given by a block of $R$-modules with prescribed restriction maps.  But in our situation we will be able to do this using some special properties of the poset of subgroups in $\cF_i$ coming from the closure property of $\un{n}$. 
Observe that we have the following property by Corollary \ref{cor:max}:

\begin{lemma}\label{lem:maxset}
For $1\leq i \leq s$, each poset $\cF_i$ is a disjoint union of components where each component has a unique maximal subgroup up to conjugacy.
\end{lemma}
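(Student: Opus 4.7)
The plan is to view $\cF_i$ as a poset of subgroups under inclusion; call two elements $H,K\in\cF_i$ members of the same \emph{component} when they can be joined by a zigzag $H=H_0,H_1,\dots,H_n=K$ of pairwise comparable subgroups. The goal is to show that each component contains a unique largest subgroup $M$. Conjugation by $G$ acts on $\cF_i$ by order-preserving bijections that permute the components, so the phrase ``up to conjugacy'' in the statement reflects the fact that different components may be $G$-conjugate; inside a single component the maximal subgroup will turn out to be unique as a subgroup.

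First I would establish a saturation property: if $M\in\cF_i$ is maximal inside its component $C$, then $M$ is already maximal in the entire poset $\cF_i$. This is because $M\leq M'$ with $M'\in\cF_i$ makes $M$ and $M'$ comparable, hence members of the same component $C$, so maximality of $M$ in $C$ forces $M'=M$.

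The heart of the argument is a short induction whose key input is condition (iii) of Definition \ref{def:algrep} (equivalently, Corollary \ref{cor:max}): for any zigzag $M=H_0,H_1,\dots,H_n$ in $\cF_i$ starting from a component-maximal $M$, every $H_j$ is contained in $M$. Assuming $H_j\leq M$ inductively, the subcase $H_{j+1}\leq H_j$ is immediate, while in the subcase $H_j\leq H_{j+1}$ the element $H_j$ is a common subgroup of $M$ and $H_{j+1}$, with all three lying in $\cF_i$; condition (iii) then places $\langle M,H_{j+1}\rangle$ in $\cF_i$, and the saturation property forces this join to equal $M$, so $H_{j+1}\leq M$. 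Applying this bound symmetrically to any two component-maximal $M,M'\in C$ yields $M'\leq M$ and $M\leq M'$, hence $M=M'$.

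The main obstacle I foresee is conceptual rather than technical: one must set up the poset structure and the notion of component on $\cF_i$ compatibly with the way Corollary \ref{cor:max} is formulated (which works with conjugacy classes under $(H)\leq(K)$), so that the inductive step truly produces an element of $\cF_i$ after applying condition (iii). Once this bookkeeping is settled, the proof is short and purely combinatorial, and the ``up to conjugacy'' qualifier follows automatically from the $G$-action on the set of components.
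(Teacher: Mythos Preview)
Your argument is correct and follows the same route the paper indicates: the paper does not give an explicit proof of this lemma, but immediately before its statement points to condition~(iii) of Definition~\ref{def:algrep} (via Corollary~\ref{cor:max}) as the reason. Your zigzag induction is precisely the unfolding of that idea, and in fact you prove the slightly stronger statement that each component has a unique maximal subgroup on the nose, not merely up to conjugacy.
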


\begin{proof} Follows from Corollary \ref{cor:max}. 
\end{proof}

For every $K \in \cJ_p$, the $q$-Sylow subgroup of the normalizer quotient $W_G(K)=N_G(K)/K$  has $q$-rank equal to one, hence it is $q$-periodic. By our starting assumption, the $q$-period of $W_G(K)$ divides $\un{n} (K)+1$. So by Swan \cite{swan1}, there exists a periodic projective resolution $\PP$ with
$$0 \to R \to P_n \to \dots \to P_1 \to P_0 \to R \to 0$$
over the group ring $RW_G(K)$ where $n=\un{n}(K)$. Note that this statement includes the possibility that $q$-Sylow subgroup of $W_G(K)$ is trivial since in that case $R$ would be projective as a $RW_G(K)$-module, and we can easily find 
a chain complex of the above form by adding a split projective 
chain complex. 

Now suppose that $K \in \cJ_p$ is such that $(K)$ is a maximal conjugacy class in $\cF_{k+1}$. Consider the $\RG_G$-complex $E_K \PP$ where $E_K$ denotes the extension functor defined in \cite[Sect.~2C]{hpy1}. By definition
$$E_K(\PP)(H) = \PP \otimes_{R[W_G(K)]}  R[(G/K)^H] $$
for every $H \in \cF$. 
We define the chain complex $E_{k+1} \PP$ as the direct sum of the chain complexes $E_K \PP$ over all representatives of isomorphism classes of maximal elements in $\cF_{k+1}$.
 Let $\NN$ denote the subcomplex of $E_{k+1}(\PP)$ obtained by restricting $E_K (\PP)$ to subgroups $H \in \cF_{\leq k}$. Let $I _{k+1} \PP$ denote the quotient complex $E_{k+1} (\PP )/\NN$. We have the following:

\begin{lemma} The homology of $I_{k+1} \PP$ is isomorphic to $J_{k+1}$ at dimensions $0$ and $n_{k+1}$ and zero everywhere else.
\end{lemma}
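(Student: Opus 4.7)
The plan is to compute $H_*(I_{k+1}\PP)$ pointwise by a case analysis on $H\in\cF$, and then to check naturality. Several cases have vanishing homology. If $H\in\cF_{\leq k}$ then by construction $\NN(H) = E_{k+1}\PP(H)$, so $I_{k+1}\PP(H) = 0$. If $H\in\cF_j$ with $j>k+1$, then monotonicity of $\un{n}$ prevents $H$ from being subconjugate to any $K\in\cF_{k+1}$, since otherwise $n_{k+1} = \un{n}(K) \leq \un{n}(H) = n_j < n_{k+1}$; hence $(G/K)^H=\emptyset$ for every chosen maximal representative and $E_{k+1}\PP(H) = 0$. A similar argument handles any nontrivial subgroup $H$ that fails to be a $p$-subgroup, since $H$ cannot then embed in any $p$-subgroup $K\in\cF_{k+1}$.

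The substantive case is $H\in\cF_{k+1}$. Lemma~\ref{lem:maxset} places $H$ in a unique component of the poset $\cF_{k+1}$, whose maximal class is represented by a single $K$ from our chosen representatives; any representative $K'$ from another component gives $(G/K')^H=\emptyset$, since any $G$-conjugate containing $H$ would itself be maximal in $H$'s component. So $I_{k+1}\PP(H)=E_K\PP(H)$. The crux is to show that $(G/K)^H$ is a single free $W_G(K)$-orbit. Freeness is automatic: the right stabilizer of $gK$ consists of cosets $nK$ with $n\in K$, which is trivial in $W_G(K)$. For uniqueness, note that $W_G(K)$-orbits on $(G/K)^H$ biject with $G$-conjugates of $K$ containing $H$; if $gKg^{-1}$ and $g'Kg'^{-1}$ both contain $H$, both lie in $\cF_{k+1}$, so condition~(iii) of Definition~\ref{def:algrep} applied to $H$ together with these two subgroups puts $\langle gKg^{-1}, g'Kg'^{-1}\rangle$ in $\cF_{k+1}$; maximality of the two generators (as conjugates of the unique maximum of their component) then forces $gKg^{-1}=g'Kg'^{-1}$. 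Consequently $R[(G/K)^H]\cong R[W_G(K)]$, so $E_K\PP(H) \cong \PP\otimes_{RW_G(K)}R[W_G(K)]\cong \PP$ as chain complexes of $R$-modules, and its homology is $R$ in degrees $0$ and $n_{k+1}$ and zero elsewhere.

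To promote the pointwise computation to an isomorphism of $R\G_G$-modules $H_i(I_{k+1}\PP)\cong J_{k+1}$ for $i\in\{0,n_{k+1}\}$, I would verify that these pointwise identifications are compatible with the restriction and conjugation maps. For $H\leq H'$ both in $\cF_{k+1}$, the unique conjugate of $K$ containing $H'$ also contains $H$, so by uniqueness it equals the conjugate containing $H$; the corresponding single orbits therefore coincide as subsets of $G/K$, making the induced map $E_K\PP(H')\to E_K\PP(H)$ the identity on $\PP$ and hence the identity on homology, matching the identity restriction maps of $J_{k+1}$. Conjugation maps are treated by exactly the same argument, using equivariance of the uniqueness statement. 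The main obstacle in the proof is the single-orbit assertion, which is precisely where the tightness condition~(iii) of Definition~\ref{def:algrep} is used; once this is established, the remainder is formal bookkeeping.
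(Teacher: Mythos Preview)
Your proposal is correct and follows essentially the same approach as the paper: compute $I_{k+1}\PP(H)$ pointwise, use condition~(iii) of Definition~\ref{def:algrep} together with maximality of $(K)$ in $\cF_{k+1}$ to show that $(G/K)^H$ is a single $W_G(K)$-orbit, and then check that the restriction maps are identities. Your treatment is in fact slightly more careful than the paper's in two respects: you explicitly handle the cases $H\in\cF_{\leq k}$, $H\in\cF_j$ with $j>k+1$, and $H$ a nontrivial $q$-subgroup for $q\neq p$ (the paper leaves these implicit), and your single-orbit argument compares two conjugates $gKg^{-1}$ and $g'Kg'^{-1}$ both containing $H$, which is the clean way to invoke condition~(iii); the paper instead writes $\langle K,\leftexp{g}{K}\rangle\in\cF_{k+1}$, which as literally stated lacks an obvious common subgroup in $\cF_{k+1}$, though the intended argument is clearly yours.
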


\begin{proof} The homology of $I_{k+1} \PP$ at $H\in \cF _{k+1}$ is isomorphic to $$\bigoplus \{ R \otimes _{R[W_G(K)]} R[(G/K)^H] : (K) \text{\ maximal in\ }  \cF _{k+1}\}$$ 
 at dimensions $0$ and $n_{k+1}$ and zero everywhere else 
 (since $N(H) = 0$ for $H \in \cF_{k+1}$).
Note that $(G/K)^H =\{ gK \colon  H^g \leq K\}$. If $gK$ is such that $H^g \leq K$, then $H\leq \leftexp{g}K$. Now by condition (iii), we must have $\langle K, \leftexp{g}K\rangle \in \cF _{k+1}$. But $(K)$ was a maximal conjugacy class in $\cF_{k+1}$, so we must have $K=\leftexp{g}K$, hence $g \in N_G(K)$.
This gives $1 \otimes gK = 1 \otimes 1$ in  $R \otimes _{R[W_G(K)]} R[(G/K)^H]$. Therefore 
$$R \otimes _{R[W_G(K)]} R[(G/K)^H]\cong R$$ for every $H \in \cF _{k+1}$. 
In addition,  $H$ can not be included in two  non-conjugate maximal subgroups in $\cF_{k+1}$, and therefore
$I_{k+1}(\PP)(H)\cong  R$ for all $H \in \cF_{k+1}$. Since the restriction maps are given by the inclusion map of fixed point sets $(G/H)^U \hookrightarrow (G/H)^V$ for every $U, V\in \cF _{k+1}$ with $V \leq U$, we can conclude that all restriction maps are identity maps. This completes the proof of the lemma.
\end{proof}

The above lemma shows that the homology of $I_{k+1}\PP$ is exactly the $\RG_G$-module $J_{k+1}$ that we would like to add to the homology of $\CC$. To construct $\DD$ we use an idea similar to the idea used in \cite[Section 9B]{hpy1}. Observe that for every $\RG_G$-chain map $f  \colon  \NN \to
\CC$, there is a push-out
diagram of chain complexes
$$\xymatrix{ 0\ar[r]& \NN \ar[d]^{f}\ar[r]&E_{k+1 }
\PP \ar[d]\ar[r]&I_{k+1}\PP \ar@{=}[d]\ar[r]&0\\
0\ar[r]&\CC \ar[r]&\CC _f \ar[r]&I_{k+1}\PP\ar[r]&0 \ .}$$

The homology of $\NN$ is only nonzero at dimensions $0$ and $n_{k+1}$ and at these dimensions the homology is only nonzero at subgroups $H \in \cF_{\leq k}$. At these subgroups the homology of $\NN (H)$ is isomorphic to the 
 direct sum 
 of modules of the form 
 $R \otimes _{RW_G(K)} R[(G/K)^H ]$, over $(K)$ maximal in $\cF_k$.
  Note that for every $H \in \cF_{\leq k}$, there is an augmentation map 
  $$\varepsilon _K ^H \colon R \otimes _{RW_G(K)} R[(G/K)^H ]\to R$$
   which takes $r \otimes gK$ to $r$ for every $r\in R$. The collection of these maps over all $H \in \cF_{\leq k}$ 
   gives a map of $\RG _G$-modules denoted 
   $$\varepsilon_K\colon (E_K R) _{\leq k} \to \un{R}_{\leq k}$$
    where the subscript $\leq k$ means the modules in question are restricted to $\cF_{\leq k}$. 
  Taking the sum over all conjugacy classes of maximal subgroups, we get a map  $$\varepsilon _{k+1}:= \sum _K \varepsilon _K\colon \ \bigoplus _K \ (E_K R )_{\leq k} \to \un{R}_{\leq k}.$$

In this notation, we have isomorphisms $H_0(\NN) \cong \bigoplus _K \ (E_K R )_{\leq k}$ and $H_0(\CC) \cong  \un{R}_{\leq k}$ which we will use to identify the homology groups in dimension zero.

\begin{lemma}\label{lem:rightH_0} If $f\colon  \NN \to \CC$ is a chain map such that the induced map 
$f_*\colon H_0 (\NN ) \to H_0 (\CC)$ agrees with the map $\varepsilon _{k+1}$  after the identifications above, then $H_0(\CC_f) \cong \un{R}_{\leq k+1}$.
\end{lemma}

\begin{proof} This follows from a commuting diagram argument which was also used in \cite[Section 9B]{hpy1} for a similar result.
Applying the zero-th homology functor, we obtain
$$\xymatrix{ 0\ar[r]& H_0(\NN) \ar[d]^{f}\ar[r]&H_0 (E_{k+1}
\PP ) \ar[d]\ar[r]&H_0(I_{k+1} \PP ) \ar@{=}[d]\ar[r]&0\\
0\ar[r]&H_0(\CC) \ar[r]&H_0 (\CC _f) \ar[r]&H_0(I_{k+1} \PP )\ar[r]&0 \ .}$$

The rows are still exact because $H_1(I_{k+1} \PP)(H)$ is non-zero only when $H\in \cF_{k+1}$, and both $H_0 (\NN)(H)$ and $H_0 (\CC)(H)$ are zero for $H \not \in \cF _{\leq k}$. So the connecting homomorphisms on the long exact homology sequences are zero maps.
Putting the modules we calculated before, we obtain
$$\xymatrix{& \ker \varepsilon_{k+1} \ar[d]\ar@{=}[r] & \ker \varepsilon _{k+1} \ar[d] \\   0\ar[r]& \oplus_K (E_KR)_{\leq k} \ar[d]^{\varepsilon _{k+1}}\ar[r]&\oplus _K E_K R \ar[d]\ar[r]& J_{k+1} \ar@{=}[d]\ar[r]&0\\
0\ar[r]&\un{R}_{\leq k} \ar[r]&H_0 (\CC _f) \ar[r]& J_{k+1}\ar[r]&0 \ .}$$
Now consider the $\RG_G$-modules in the middle vertical sequence. We claim that the restriction map $r^L _H$ 
from a subgroup $L \in \cF_{k+1}$ to a subgroup $H \in \cF_{\leq k}$ is the identity map in the module $H_0 (\CC_f)$. Note that the restriction maps $r^L _H$
in the modules appearing in the middle vertical sequence are given as follows (for each summand $K$ of maximal subgroups in $\cF_{k+1}$):
$$\xymatrix{0\ar[r]& 0 \ar[d]^{r ^L _H}\ar[r]& R \otimes _{RW_G(K)} R[(G/K)^L] \ar[d]^{r ^L _H}\ar[r]^-{\cong}& R \ar[d]^{r^L _H}\ar[r]&0\\
0\ar[r]&\ker \varepsilon ^H _K  \ar[r]& R \otimes _{RW_G(K)} R[(G/K)^H] \ar[r]^-{\varepsilon ^H_K} & R\ar[r]&0 \ .}$$
It is easy to see from this diagram that the restriction map on the right most vertical line is the identity map because the restriction map in the middle is the linearization of the inclusion map $(G/K)^L \subset (G/K)^H$ of fixed sets.
\end{proof}

The above lemma shows that  the complex $\CC _f$ has the correct homology
 if we take $f\colon  \NN \to \CC$ as the chain map inducing 
  $\varepsilon _{k+1}$ on $H_0$.
Unfortunately, we can not take $\DD$ as $\CC _f$ since the complex $I_{k+1}\PP$ is not  projective in general, and neither is $\NN$. We note that finding a chain map 
$f\colon \NN\to \CC$ satisfying the given condition is not an easy task without projectivity
 (compare \cite[Section 9B]{hpy1}, where this complex was  projective). So we first need to replace the sequence $0 \to \NN \to E_{k+1} \PP \to I_{k+1} \PP \to 0$ with a sequence of projective chain complexes.

\begin{lemma}\label{lem:projectiveresolutions} There is a diagram of chain complexes where all the complexes $\PP', \PP'', \PP '''$ are finite projective chain complexes over $\RG_G$ and all the vertical maps induce isomorphisms on homology:
$$\xymatrix{ 0\ar[r]& \PP' \ar[d]\ar[r]& \PP''\ar[d]\ar[r]&\PP ''' \ar[d]\ar[r]&0\\
0\ar[r]&\NN \ar[r]&E_{k+1} \PP \ar[r]&I_{k+1}\PP\ar[r]&0 \ .}$$
\end{lemma}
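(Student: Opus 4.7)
The strategy is a horseshoe-type construction. The middle term $E_{k+1}\PP$ is already a finite projective $\RG_G$-chain complex, since the extension functor $E_K$ sends finitely-generated projective $R[W_G(K)]$-modules (the terms of $\PP$) to finitely-generated projective $\RG_G$-modules. So the task reduces to constructing finite projective resolutions $\varphi \colon \PP' \to \NN$ and $\psi \colon \PP''' \to I_{k+1}\PP$, and then assembling $\PP''$ from them so that the vertical maps are quasi-isomorphisms and the rows form a short exact sequence.

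For the outer resolutions, observe that both $\NN$ and $I_{k+1}\PP$ have explicitly described, finitely-generated homology: the homology of $I_{k+1}\PP$ is $J_{k+1}$ concentrated in degrees $0$ and $n_{k+1}$, and $\NN$ is the analogous object supported on $\cF_{\leq k}$. Using the extension functors $E_K$ for maximal conjugacy classes $(K)$ in $\cF_{k+1}$ applied to finite projective $R[W_G(K)]$-chain complexes, together with Dold's algebraic Postnikov technique (Lemma~\ref{lem:dold}) over the noetherian local ring $R = \bZq$, we produce finite projective resolutions of both $\NN$ and $I_{k+1}\PP$. The $q$-periodicity of each $W_G(K)$ (which follows from the rank assumption (ii) of Theorem~\ref{thm:q-local sphere}) is what makes the length finite.

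To assemble $\PP''$, first lift $\psi$ degreewise to a graded module map $\widetilde{\psi} \colon \PP''' \to E_{k+1}\PP$, using projectivity of each $\PP'''_n$ and surjectivity of $E_{k+1}\PP_n \twoheadrightarrow I_{k+1}\PP_n$. The defect from being a chain map, $\delta := \partial_{E_{k+1}\PP} \widetilde{\psi} - \widetilde{\psi}\, \partial_{\PP'''}$, takes values in $\NN$; by a standard Cartan--Eilenberg induction, $\delta$ admits a degree $-1$ lift $\sigma \colon \PP''' \to \PP'$ with $\varphi\sigma = \delta$ and $\partial_{\PP'}\sigma + \sigma\partial_{\PP'''} = 0$. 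Set $\PP''_n := \PP'_n \oplus \PP'''_n$ with the twisted differential $\partial(p,q) := (\partial_{\PP'}(p) + \sigma(q),\, \partial_{\PP'''}(q))$, and define the middle vertical map by $(p,q) \mapsto \varphi(p) + \widetilde{\psi}(q)$. A short calculation using $\varphi\sigma = \delta$ shows this is a chain map and the full diagram commutes; the five-lemma applied to the long exact homology sequences of the two rows then gives that the middle vertical arrow is a quasi-isomorphism.

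The main obstacle is the first step. Not every finitely-generated $\RG_G$-module has finite projective dimension, so finiteness of the resolutions $\PP'$ and $\PP'''$ is not automatic. The argument must exploit the very specific structure of $\NN$ and $I_{k+1}\PP$ as pieces of $E_{k+1}\PP$ built from extension-functor images of finite periodic $R[W_G(K)]$-chain complexes, combined with the $q$-periodicity of the normalizer quotients furnished by hypothesis (ii) of Theorem~\ref{thm:q-local sphere}. Once finiteness is secured, the horseshoe assembly in the later steps is essentially formal.
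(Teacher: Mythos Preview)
Your horseshoe assembly in the second half is fine in outline, and the paper in fact bypasses it entirely by quoting \cite[Lemma~11.6]{lueck3}: once $E_{k+1}\PP$ is finite projective and $\NN$ is shown to be weakly equivalent to a finite projective complex, the existence of the diagram with $\PP'''$ finite projective follows formally. So the assembly step is not where the content lies.

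The real issue is your first step, and here there is a genuine gap. You invoke Lemma~\ref{lem:dold} (Dold's Postnikov technique), but that lemma takes as input a \emph{projective} chain complex with finitely generated homology; neither $\NN$ nor $I_{k+1}\PP$ is projective, so the lemma does not apply. More seriously, your appeal to the $q$-periodicity of $W_G(K)$ is misplaced: that periodicity was already spent in constructing the Swan resolution $\PP$, and it does not by itself bound the projective dimension of the $R\G_G$-modules appearing in $\NN$. What the paper actually uses is a coprimality argument. The subgroups $K$ indexing $E_{k+1}\PP$ lie in $\cJ_p$ and are therefore $p$-groups, while $R=\bZq$ with $q\neq p$. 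Hence for each $H\in\cF_{\leq k}$ the permutation module $R[(G/K)^H]$ decomposes as a sum of $R[W_G(H)/W_{\leftexp{g}{K}}(H)]$ with $p$-group stabilizers, and is therefore projective over $RW_G(H)$. This makes each chain module $\NN_i$ projective at every $H$, and one then resolves $\NN_i$ over $R\G_G$ by iterating the maps $\bigoplus_H E_H\NN_i(H)\to\NN_i$, obtaining projective dimension at most $l(\G_G)$. Without this coprimality observation there is no mechanism in your argument that forces the resolutions of $\NN$ and $I_{k+1}\PP$ to be finite.
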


\begin{proof}  Since $E_K\PP$ is a projective chain complex of length $n$, $E_{k+1}\PP$ is a finite projective chain complex. So, by \cite[Lemma 11.6]{lueck3}, it is enough to show that $\NN$ is weakly equivalent to a finite projective complex. For this first note that $\NN = \bigoplus \NN_K$ is
a direct sum of chain complexes $\NN _K$ where $\NN _K$ is 
the restriction of $E_K \PP$ to subgroups $H \in \cF_{\leq k}$. So it is enough to show that $\NN _K$ is weakly equivalent to a finite projective chain complex. To prove this, we will show that for each $i$, the $\RG_G$-module $\NN_i:=(\NN_K)_i$ has a finite projective resolution. The module $\NN_i$ is nonzero only at subgroups $H \in \cF_{\leq k}$  and at each such a subgroup, we have 
$$\NN_i (H)=(E_K \PP_i )(H)=\PP_i \otimes _{RW_G(K)} R[(G/K)^H].$$ 
So, as an $RW_G(H)$-module $\NN_i(H)$ is a direct summand of $R[(G/K)^H]$ which is isomorphic to 
$$\bigoplus \{ R\big [W_G (H) / W_{\leftexp{g}{K}} (H ) \big ] : K\text{-conjugacy classes of subgroups\ }H^{g} \leq K \}$$ 
as an $RW_G(H)$-module.
 Since $K$ is a $p$-group, these modules are projective over the ground ring $R$ because $R$ is $q$-local. 
So, for each $H \in \cF _{\leq k}$, the $RW_G(H)$-module $\NN_i(H)$ is projective. Now consider the map $$\pi\colon \oplus _H E_H \NN_i (H) \to \NN_i $$ induced by maps adjoint to the identity maps at each $H$. We can take $\oplus _H E_H \NN_i (H)$ as the first projective module of the resolution,  and consider the kernel $\ZZ_0$ of $\pi\colon \oplus _H E_H \NN_i (H) \to \NN_i$. Note that $\ZZ_0$ has smaller length and it also have the property that at each $L$, the $W_G(L)$ modules $\ZZ_0(L)$ are projective.
This follows from the fact that $R[(G/H)^L ]$ is projective
as a $W_G(L)$-module by the same argument we used above. Continuing this way, we can find a finite projective resolution for $\NN_i$ of length $\leq l(\G)$.
\end{proof}

Now it remains to show that there is a chain map $f\colon \PP'\to \CC$, such that the induced map 
$f_*\colon H_0 (\PP ')\cong H_0(\NN) \to H_0 (\CC)$ is given by $\varepsilon_{k+1}$. Recall that  $\varepsilon_{k+1} =\sum _K \varepsilon_K$   is the sum of augmentation maps
over the conjugacy classes of maximal subgroups $K$ in $\cF_{k+1}$. 
Then the complex $\DD$ will be defined as the push-out complex that fits into the diagram 
$$\xymatrix{ 0\ar[r]& \PP' \ar[d]^{f}\ar[r]& \PP'' \ar[d]\ar[r]& \PP''' \ar@{=}[d]\ar[r]&0\\
0\ar[r]&\CC \ar[r]&\DD \ar[r]& \PP'''\ar[r]&0 \ .}
$$
Since both $\CC$ and $\PP'''$ are finite projective chain complexes, $\DD$ will also be a finite projective complex. The fact that $\DD$ has the right homology follows from Lemma \ref{lem:rightH_0}. 

To construct $f\colon\PP' \to \CC$, 
first note that  the reduced homology of the  chain complex $\CC$ is zero below dimension $n_k$. By assumption on the gaps between nonzero homology dimensions, we have 
  $n_k \geq n_{k+1}+l(\G_G)\geq l (\PP ')$. 
So, starting with the map $\varepsilon_{k+1}$ at $H_0$, we can obtain a chain map as follows:
$$
\xymatrix{ \ar[r]& 0 \ar[r] & P'_m \ar[d]^{f_m} \ar[r]^-{\bd _m ^{P'}}& \cdots
\ar[r]&P'_0
\ar[d]^{f_0}\ar[r]& H_0 (\NN ) \ar[d]^{ \varepsilon _{k+1}}\ar[r]&0
\\\ar[r] & C_{m+1} \ar[r] & C_m \ar[r]^{\bd _m ^C } & \cdots\ar[r] & C_0\ar[r]&H_0 (\CC )\ar[r]&0 }
$$
where $m=l (\PP ')$. This completes the proof of Theorem \ref{thm:q-local sphere}. 
\end{proof}

\section{The Proof of Theorem A}\label{sec:main theorem}

In this section we establish our main  technique for constructing actions on homotopy spheres, based on a given collection of $\cF_p$-representations, for the primes $p\in \PG$, where 
$\cF_p$ denotes the family of all $p$-power order subgroups of $G$  (see Definitions  \ref{def:sylowrep}  and \ref{def:allprimes}). 
Theorem A stated in the introduction will follow from this theorem almost immediately once we use the family of $p$-effective characters constructed by M.~A.~Jackson \cite{jackson1}. The main technical theorem is the following:

\begin{theorem}\label{thm:maintech} Let $G$ be a finite group and let $\PG = \{p \vv \rk_p G \geq 2\}$. Suppose that $\VV(\cF_p)$ is a $\cF_p$-representation for $G$, with $\Iso(\VV(\cF_p)) = \cJ_p$, for each $p \in \PG$. Then there exists a finite $G$-homotopy representation $X$ with isotropy in $\cJ= \bigcup \{ \cJ_p \vv p \in \PG\}$ if and only if the following two conditions hold
\begin{enumerate}
\item If $p \in \PG$ and $1\neq H \in \cJ_p$, then we have $\rk _q (N_G(H)/H )\leq 1$ for every $q\neq p$;
\item The dimension function $\un{n}$ has the closure property. 
\end{enumerate}
\end{theorem}

%\begin{theorem}\label{thm:maintech} Let $G$ be a finite group and let $\PG = \{p \vv \rk_p G \geq 2\}$. Suppose that 
%\begin{enumerate}
%\item $\VV(\cF_p)$ is a $\cF_p$-representation for $G$, with $\Iso(\VV(\cF_p)) = \cJ_p$, for each $p \in \PG$;
%\item If $p \in \PG$ and $1\neq H \in \cJ_p$, then we have $\rk _q (N_G(H)/H )\leq 1$ for every $q\neq p$;
%\item The dimension function $\un{n}$ has the closure property. 
%\end{enumerate}
%Then there exists a finite $G$-homotopy representation $X$
%%$G$-CW-complex $X\simeq S^n$, 
%with isotropy in $\cJ= \bigcup \{ \cJ_p \vv p \in \PG\}$. \remI{changed}
%%, which is a    $G$-homotopy representation for $G$. 
%\end{theorem}

\begin{remark}  The construction we give in the proof of Theorem \ref{thm:maintech} gives a simply-connected $G$-homotopy representation $X$, with $\dim X^H \geq 3$, for all $H \in \cJ$, whenever $X^H \neq \emptyset$. It also relates the dimension function of $X$ to the linear dimension functions
$\Dim S(V_H)$, for $V_H \in \bigcup \{ \VV(\cF_p) \vv p \in \PG\}$ in the following way: for every prime $p \in \PG$, there exists an integer $k_p>0$ such that for every $H \in \cF_p$, the equality $\dim X^H=\dim S(V_H ^{\oplus k_p} )^H$ holds.

\end{remark}

%The closure property for the dimension function $\un{n}$ is a necessary condtion to construct a $G$-homotoy representation \cite[II.10]{tomDieck2}. \remI{added}
%As we discussed in the previous section (see Remark \ref{rem:necessity}), the condition on the $q$-rank of $N_G(H)/H$ is also a necessary condition for the existence of such actions. Recall that this condition is used in an essential way in the proof of Theorem \ref{thm:q-local sphere}.

\begin{proof}[The proof of Theorem \ref{thm:maintech}] 
The closure property for the dimension function $\un{n}$ is a necessary condition  to construct a $G$-homotopy  representation \cite[II.10]{tomDieck2}. 
As we discussed in the previous section (see Remark \ref{rem:necessity}), the condition on the $q$-rank of $N_G(H)/H$ is also a necessary condition for the existence of such actions (see Lemma \ref{lem:ranktwoA}).  Recall that this condition is used in an essential way in the proof of Theorem \ref{thm:q-local sphere}. 

By the realization theorem (Theorem 
\ref{thm:tightness}), we only need to  construct  a finite free chain complex of $\ZG _G $-modules satisfying the conditions (i), (ii) and (iii) of Definition \ref{def:algrep}.  If we apply Theorem \ref{thm:q-local sphere} to the preliminary local model constructed in Section \ref{sec:prime p},  we obtain a finite projective complex $\CC\up $, over the orbit category $\bZp\G_G$ with respect to the family $\cJ$, for each prime $p$ dividing the order of $G$. In addition, $\CC\up $ is an oriented  $\bZp$-homology $\un{n}$-sphere, with the same dimension function $\un{n} = \HomDim \CC\up (0)$ coming from the preliminary local models. By construction, the complex $\CC\up $ satisfies the conditions (i), (ii) and (iii) of Definition \ref{def:algrep} for $R = \bZp$.

We may also assume that $\un{n}(H)\geq 3$ for every $H \in \cJ$, and that the gaps between non-zero homology dimensions have the following property: for all $K, L\in \cJ$ with $\un{n}(K)> \un{n}(L)$, we have $\un{n}(K)-\un{n}(L)\geq l(\G _G)$ where $l(\G_G)$ denotes the length of the longest chain of maps in the category $\G_G$.

To complete the proof of Theorem \ref{thm:maintech}, we first need to glue these complexes $\CC\up $ together to obtain an algebraic $\un{n}$-sphere over $\bZ\G_G$. By \cite[Theorem 6.7]{hpy1}, there exists a finite projective chain complex $\CC$ of $\bZ\G_G$-modules, which is a $\bZ$-homology $\un{n}$-sphere, such that $\bZp \otimes \CC$ is chain homotopy equivalent to the local model $\CC\up$, for each prime $p$ dividing the order of $G$. The complex $\CC$  has a (possibly non-zero) finiteness obstruction (see Lueck \cite[\S10-11]{lueck3}), but this can be eliminated by joins (see \cite[\S 7]{hpy1}).

After applying \cite[Theorem 7.6]{hpy1}, we may assume that $\CC$ is a finite free chain complex of $\bZ\G_G$-modules which is a $\bZ$-homology $\un{n}$-sphere. Moreover, $\CC$ is an algebraic homotopy representation: it satisfies the conditions 
(i), (ii) and (iii) of Definition \ref{def:algrep} for $R = \bZ$, since these conditions hold locally at each prime. 

We have now established all the requirements for Theorem \ref{thm:tightness}. For the family $\cF$ used in its statement, we use $\cF = \cJ$. For all $H \in \cF$, we have the condition   $\un{n}(H)\geq 3$. Now Theorem \ref{thm:tightness} 
 gives a finite $G$-CW-complex $X\simeq S^n$ with isotropy $\cJ$ such that $X^H$ is an homotopy $\un{n}(H)$-sphere  for every $H \in \cJ$.   
\end{proof}

Now we are ready to prove Theorem A.

\begin{proof}[The proof of Theorem A] Let $G$ be a rank $2$ finite group and let $\PG$ denote the set of primes with $\rk _p G=2$. Since it is assumed that $G$ does not $p'$-involve $\Qd(p)$ for any odd prime $p$, we can apply  \cite[Theorem 47]{jackson1}  and obtain a $p$-effective representation $V_p$, for every prime $p\in \PG$. If the dimension function satisfies the closure property, 
we will apply Theorem \ref{thm:maintech} to the $\cF_p$-representations $\VV(\cF_p)$ given by this collection 
$\{V_p\}$ (see Example \ref{ex:pgroupfamily}). Since $V_p$ is $p$-effective means that all isotropy subgroups in $\cH_p$ are rank one $p$-subgroups (see Example \ref{ex:peffective}), the isotropy is contained in the family $\cH$  of rank one $p$-subgroups of $G$, for all $p \in \PG$. We therefore
obtain a    $G$-homotopy representation with rank one isotropy in $\cH$.  

The only thing we need to show is that for every prime $p\in \PG$, the dimension function $\un{n}\up$ of the $\cF_p$-representation $\VV(\cF_p)$ satisfies the closure property. 

\begin{lemma}\label{lem:podd} If  $V_p$ is 
 the induced representation  $ \Ind_E^{G_p} W$, 
%$V_p$ is equal to the induced representation $\Ind _E ^{G_p} \chi$ 
where $E=\Omega_1(Z(G_p))$ and $W$ is the reduced regular representation of $E$. Then $\un{n}\up$ has the closure property.
\end{lemma}
\begin{proof} Using the Mackey formula, it is easy to see that for every $p$-subgroup $K \leq G_p$, the dimension of a fixed subspace in $\Res ^{G_p }_K V_p$ depends only on the index of $K$ in $G_p$, provided that the dimension is nonzero. This implies that for any two distinct $p$-subgroups $L< K$ in $G$, with nonempty fixed points on $\VV (\cF_p)$, we have $\un{n}\up (L)\neq \un{n}\up (K)$. Therefore the closure property for $\un{n}\up$ is automatic.
\end{proof}

Lemma \ref{lem:podd} takes care of all the odd prime cases (see the construction in 
\cite[Proposition 27]{jackson1} and \cite[Theorem 35]{jackson1}). 
The only remaining cases occur when $p=2$ and the Sylow $2$-subgroup is either dihedral, semi-dihedral, or wreathed (see \cite[Proposition 39]{jackson1}).

As we show in Example \ref{ex:aseven}, it is possible that in these cases, the dimension function may fail to satisfy the closure property. However,  this can only happen if there are two rank one $2$-subgroups $H,K$ with $H \cap K\neq 1$ such that $\la H, K \ra$ is not a $2$-group. Because if $\la H, K\ra$ is a $2$-group, then all these subgroups must lie in a Sylow $2$-subgroup and the closure property will follow from the fact that the restriction of $\un{n} ^{(2)}$ to $G_2$ is the dimension function of a linear representation
$V_2$. Since we assumed that $G$ has the rank one intersection property when $\Omega_1(Z(G_2))$  is not strongly closed, the proof of Theorem A is complete. 
\end{proof}

\begin{proof}[The proof of Corollary B]
If $\rk_q(G)\leq 1$, then for every $p$-group $H$, we must have $\rk _q (N_G(H)/H)\leq 1$. So we can apply Theorem A to obtain Corollary B. 
\end{proof}

Note that the condition about $\Qd(p)$ being not $p'$-involved in $G$ is a necessary condition for the existence of 
actions of rank $2$ groups on finite CW-complexes $X \simeq S^n$ with rank one isotropy. 
The following argument is an easy extension of the one given by \" Unl\" u in \cite[Theorem 3.3]{unlu1}.

\begin{proposition}\label{pro:involveQdp} Let $p$ be an odd prime.
If $G$ acts with rank one isotropy on a finite dimensional complex $X$  with the mod-$p$ homology of a  sphere, then $G$ cannot $p'$-involve $\Qd (p)$.
\end{proposition}

\begin{proof} Suppose that $G$ has a normal $p'$-subgroup $K$ such that $\Qd(p)$ is isomorphic to a subgroup in $N_G(K)/K$. Let $L$ be subgroup of $G$ such that $K\triangleleft L\leq N_G(K)$ and $L/K\cong \Qd(p)$.
The quotient group $Q=L/K$ acts on the orbit space $Y=X/K$ via the action defined by $(gK)(Kx)=Kgx$ for every $g \in L$ and $x\in X$. 

We observe two things about this action. First, by a transfer argument \cite[Theorem 2.4, p.~120]{bredon1}, the space $Y$ has the mod $p$ homology of a sphere. Second, all the isotropy subgroups of the $Q$-action on $Y$ have $p$-rank $\leq 1$. To see this, let $Q_y$ denote the isotropy subgroup at $y\in Y$ and let $x\in X$ be such that $y=Kx$. It is easy to see that $Q_y=L_xK/K \cong L_x /(L_x\cap K)$. Since $K$ is a $p'$-group, this shows that $p$-subgroups of $Q_y$ are isomorphic to $p$-subgroups of the isotropy subgroup $L_x$. Since $L$ acts on $X$ with rank one isotropy, we conclude that $\rk _p (Q_y)\leq 1$ for every $y \in Y$.  

Now the rest of the proof follows from the argument given in \"Unl\"u  \cite[Theorem 3.3]{unlu1}. Let $P$ be a $p$-Sylow subgroup of $Q\cong \Qd(p)$. Then $P$ is an extra-special $p$-group of order $p^3$ with exponent $p$ (since $p$ is odd). Let $c$ denote a central element and $a$ a non-central element in $P$. Since the $P$-action on $Y$ has rank one isotropy subgroups, we have $Y^E=\emptyset$ for every rank two $p$-subgroup $E\leq P$. Therefore $Y^{\langle c\rangle} =\emptyset$ by Smith theory, since otherwise $P/\langle c\rangle \cong \bZ/p \times \bZ/p$  would act freely on $Y^{\langle c\rangle}$ which is a  mod $p$ homology sphere. Now consider the subgroup $E=\langle a, c\rangle$. Since $\langle a \rangle$ and $\langle c \rangle$ are conjugate  in $Q$, all cyclic subgroups of $E$ are conjugate. In particular, we have $Y^H =\emptyset$ for every cyclic subgroup $H$ in $E$. This is a contradiction, since $E$ cannot act freely on $Y$.
\end{proof}

\begin{remark} A shorter proof can be given using more group theory.   For a  finite group $L$, and a  normal $p'$-subgroup   $K$ of $L$,   there is an isomorphism\footnote{We thank Radha Kessar for this information.} between the $p$-fusion systems  $\cF_L(S)$ and  $\cF_{L/K}(SK/K)$, where $S$ is a $p$-Sylow subgroup of $L$. So if $L/K \cong \Qd(p)$, then $L$ has an extra-special $p$-group $P$ of order $p^3$ with exponent $p$ such that a central element $c\in P$ is conjugate to a non-central element $a\in P$. This leads to a contradiction in the same way as above.
\end{remark}

\section{Discussion and examples}\label{sec:examples}

We first discuss the rank conditions in the statement of Theorem A. 
Suppose that $X$ is a finite $G$-CW-complex. Recall that  
$\Iso(X) = \{ H \vv H \leq G_x \text{\ for some\ } x \in X\} $
 denotes the minimal family containing all the isotropy subgroups of the $G$-action on $X$. We call this the \emph{isotropy family}. Note that 
 $H \in \Iso(X)$  if and only if $X^H \neq \emptyset$. We say that $X$ has \emph{rank $k$ isotropy} if $\rk G_x \leq k$ for all $x \in X$ and there exists a subgroup $H$ with $\rk H = k$ and $X^H \neq \emptyset$.

\begin{lemma}\label{lem:ranktwoA}
 Let $G$ be a finite group, and let $X$ be a finite $G$-CW-complex with $X \simeq S^n$.
 \begin{enumerate}
\item  If $H$ is a maximal $p$-subgroup in $\Iso(X)$, then $\rank_p (N_G(H)/H )\leq 1$.
\item  If $X$ has prime power isotropy and $1 \neq H\in \Iso(X)$
 is a $p$-subgroup, with $X^H$ an integral homology sphere, then
$\rank_q (N_G(H)/H) \leq 1$, for all primes $q\neq p$. 
\end{enumerate} \end{lemma}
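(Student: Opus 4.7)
The plan is to show in both parts that a rank-two elementary abelian subgroup of the Weyl group $W_G(H)=N_G(H)/H$ would be forced to act freely on $X^H$, and then derive a contradiction from classical P.~A.~Smith theory. The core trick, used in each part, is that any $x\in X^H$ fixed by $gH\in W_G(H)$ is automatically fixed by the larger subgroup $\langle H,g\rangle$, because $H$ already fixes $x$ and hence so does $g$. This turns ``non-free action on $X^H$'' into ``larger isotropy subgroup in $\Iso(X)$''.

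For part (i), I would take a Sylow $p$-subgroup $P/H$ of $W_G(H)$. If some nontrivial $gH\in P/H$ fixed a point $x\in X^H$, then $\langle H,g\rangle$ would be a $p$-subgroup of $\Iso(X)$ strictly containing $H$, contradicting the maximality of $H$ among $p$-subgroups in $\Iso(X)$. Hence $P/H$ acts freely on $X^H$. Since $H$ is a $p$-group and $X\simeq S^n$, Smith theory gives that $X^H$ has the mod-$p$ cohomology of a sphere; and no $(\bZ/p)^2$ can act freely on a mod-$p$ homology sphere. Therefore $\rank_p(P/H)\leq 1$, which gives $\rank_p(W_G(H))\leq 1$.

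For part (ii), I would argue by contradiction: suppose $\rank_q(W_G(H))\geq 2$, and choose a subgroup $E$ with $H\leq E\leq N_G(H)$ and $E/H\cong(\bZ/q)^2$. The key step is to show $E/H$ acts freely on $X^H$. If a nontrivial $gH\in E/H$ fixed some $x\in X^H$, then $\langle H,g\rangle$ would lie in $\Iso(X)$ (which is closed under taking subgroups of stabilizers). But $H\triangleleft\langle H,g\rangle$ with cyclic quotient of order $q$, so $|\langle H,g\rangle|=|H|\cdot q$; since $H\neq 1$ is a $p$-group and $q\neq p$, this order is divisible by two distinct primes and so is not a prime power, contradicting the hypothesis $\Iso(X)\subseteq\cP$. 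Thus $E/H\cong(\bZ/q)^2$ acts freely on $X^H$, which is a mod-$q$ homology sphere because it is an integral homology sphere. This contradicts Smith theory's prohibition on free $(\bZ/q)^2$-actions on mod-$q$ homology spheres, completing the proof.

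I do not expect any serious obstacle: the only real content is the promotion trick from fixed point in $X^H$ to larger isotropy, which is elementary, together with the two classical Smith-theoretic inputs (mod-$p$ cohomology of $X^H$, and the rank bound for free actions on mod-$p$ homology spheres). The hypothesis $\Iso(X)\subseteq\cP$ is used only in part (ii), where it supplies the prime-power obstruction; in part (i) the maximality of $H$ already does the work.
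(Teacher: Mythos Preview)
Your proposal is correct and follows essentially the same approach as the paper: both parts use the ``promotion'' observation that a nontrivial element of $W_G(H)$ fixing $x\in X^H$ forces a strictly larger isotropy subgroup $\langle H,g\rangle\leq G_x$, and then invoke the two standard Smith-theoretic inputs (that $X^H$ is a mod-$p$ homology sphere, and that $(\bZ/p)^2$ cannot act freely on such). In part (ii) the paper phrases the freeness slightly more directly---observing that $G_x$ must be a $p$-group since it contains the nontrivial $p$-group $H$ and lies in $\cP$---but this is equivalent to your order computation $|\langle H,g\rangle|=|H|\cdot q$.
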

 
\begin{proof}
This follows from two basic results of 
P.~A.~Smith theory \cite[III.8.1]{bredon1}), which state (i) that the fixed set of a $p$-group action on a finite-dimensional mod $p$ homology sphere is again a mod $p$ homology sphere (or the empty set), and (ii) that $\bZ/p\times \bZ/p$ can not act freely on a finite $G$-CW-complex $X$ with the mod $p$ homology of a sphere.

For any prime $p$ dividing the order of $G$, let $H \in \Iso(X)$ denote a maximal $p$-subgroup with  $X^H \neq \emptyset$. 
For any $x \in X^H$, we have $H \leq G_x$ and if $g\cdot x = x$, for some $ g \in N_G(H)$ of $p$-power order, it follows that the subgroup $\la H, g\ra \leq G_x$. Since $H$ was a maximal $p$-subgroup in $\Iso(X)$, we conclude that $g \in H$. Therefore the $p$-Sylow subgroup of 
$N_G(H)/H$ acts freely on the fixed set $X^H$, which is a mod $p$ homology sphere, and hence $\rank_p (N_G(H)/H) \leq 1$. 

If $q \neq p$ and $H$ is  a $p$-subgroup in $\Iso(X)$,  then  any $q$-subgroup $Q$ of $N_G(H)/H$ must act freely on $X^H$ (since $x\in X^H$ implies $G_x$ is a $p$-group).  Since $X^H$ is assumed to be an integral homology sphere, Smith theory implies that $\rk_q (Q)\leq 1$.
\end{proof}

\begin{example}\label{ex:ExtraSpecial}
If $G$ is the extra-special $p$-group of order $p^3$, then the centre $Z(G) = \bZ/p$ can not be a maximal isotropy subgroup in $\Iso(X)$. On the other hand, we know that $G$ acts on a finite complex $X\simeq S^n$ with rank one isotropy: just take the linear sphere $S(\ind^G _{Z(G)} W)$ for some nontrivial one-dimensional representation $W$ of $Z(G)$. So we can not require that $G$ acts on $X\simeq S^n$ with $\Iso(X)$ containing  all rank one subgroups. 
\end{example}

For any prime $p$, we can  restrict the $G$-action on $X$ to a $p$-subgroup of maximal rank. This gives the following well-known conclusion. 
 
 \begin{corollary}\label{cor:ranktwo}
If $X$ is a finite $G$-CW-complex with $X \simeq S^n$ and rank $k$ isotropy, then $\rk_p G \leq k+1$, for all primes $p$.
 \end{corollary}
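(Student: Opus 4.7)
My plan is to follow the hint given just before the corollary: fix a prime $p$, set $m = \rk_p G$, pick an elementary abelian subgroup $E \cong (\bZ/p)^m \leq G$, and restrict the $G$-action on $X$ to $E$. Working with such an $E$ is convenient because it is abelian (so $N_E(H) = E$ for every $H \leq E$) and the subquotients are again elementary abelian, giving the clean rank identity $\rk_p E = \rk_p H + \rk_p (E/H)$.

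Next, I would view $X$ as a finite $E$-CW-complex. Since $X \simeq S^n$, it is a mod $p$ homology $n$-sphere, so Smith theory applies to the $p$-group $E$ acting on $X$. I would choose $H \leq E$ to be a maximal subgroup with $X^H \neq \emptyset$, i.e.\ a maximal element of $\Iso(X|_E) = \{K \leq E : X^K \neq \emptyset\}$. The key step is to apply Lemma \ref{lem:ranktwoA}(i) to the $E$-action on $X$: that proof uses only (a) $X^H$ being a mod $p$ homology sphere (by Smith theory) and (b) $N_E(H)/H$ acting freely on $X^H$ (by the maximality of $H$), both of which are valid here with $G$ replaced by $E$. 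This yields $\rk_p(E/H) = \rk_p(N_E(H)/H) \leq 1$.

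Finally, since $X^H \neq \emptyset$ there exists $x \in X$ with $H \leq G_x$, and the rank $k$ isotropy hypothesis on the $G$-action gives $\rk H \leq \rk G_x \leq k$. Combining,
\[
m \;=\; \rk_p E \;=\; \rk_p H + \rk_p(E/H) \;\leq\; k + 1,
\]
which is the claim. I do not expect any serious obstacle: the only subtlety is that Lemma \ref{lem:ranktwoA}(i) was stated for the ambient group $G$, so one must note explicitly that its proof applies verbatim to the subgroup $E$ acting on $X$. The degenerate case $H = 1$ (so $E$ acts freely on $X$) is subsumed, since Smith theory then forces $m \leq 1 \leq k+1$ directly.
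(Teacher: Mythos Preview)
Your proposal is correct and follows essentially the same approach as the paper: the paper's entire argument is the one-line hint ``restrict the $G$-action on $X$ to a $p$-subgroup of maximal rank,'' and your proof is precisely the natural expansion of that hint, invoking Lemma~\ref{lem:ranktwoA}(i) for the restricted $E$-action and combining the resulting bound $\rk_p(E/H)\leq 1$ with the isotropy bound $\rk_p H \leq k$.
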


\begin{remark}\label{rem:necessity2}
These results help to explain the rank conditions in Theorem A. First, if we have rank one isotropy, then we must assume that $G$ has rank two. However, condition (ii) on the $q$-ranks of  normalizer quotients is not necessary in general for the existence of a finite $G$-CW complex homotopy equivalent to a sphere with rank one prime power isotropy (see Example \ref{ex:aseven} for $G=A_7$). 

In contrast, Lemma \ref{lem:ranktwoA}(ii) shows that in order to construct a  
 $G$-homotopy   representation (with prime power isotropy) the normalizer quotients must satisfy the $q$-rank conditions  at all $p$-subgroups $H$, with $q \neq p$,  for which $X^H \neq \emptyset$. 
It follows that
the corresponding condition (ii) in the setting of
Theorem \ref{thm:maintech} is in fact a necessary condition. Example \ref{ex:ExtraSpecial} shows that not every rank one $p$-subgroup $H$ must fix a point on $X$ even when $X$ is assumed to be a  $G$-homotopy representation. 
\end{remark}

In order to get a complete list of necessary conditions, we must have more precise control of the structure of the isotropy subgroups. It might also be possible to construct finite $G$-CW complexes $X \simeq S^n$ with rank one prime power isotropy, for which the fixed sets $X^H$ are not homotopy spheres. The work of Petrie \cite[Theorem C]{petrie1978} and  tom Dieck \cite[Theorem 1.7]{tomDieck1985} explores this direction, but it is not clear to us that their results  answer our question. 

An attractive open problem is the case of finite rank two groups of \emph{odd} order. Such groups admit $G$-representation spheres $S(W_p)$ for each prime $p \in \PG$, whose isotropy groups have $p$-rank one (see Adem \cite[5.29]{adem_2007}). These spheres $S(W_p)$ could be used as the preliminary $p$-local models, instead of the construction given in Section \ref{sec:prime p}, but one would still need to add and subtract homology  to obtain the same homological dimension function at all primes. At present, we only know how to complete this step (as in Section \ref{sec:other primes}) under the conditions  (ii) and (iii) of Theorem \ref{thm:maintech}. The problem is that these conditions may not always hold for the
representation spheres $\{S(W_p): p \in \PG\}$.

Now we discuss an application of Theorem A.

\begin{example}\label{ex:asix}
\emph{The alternating group $G = A_6$ admits a finite
 $G$-homotopy representation $X$ with rank one prime power isotropy}.
  This follows from Theorem A once we verify that $G$ satisfies the necessary conditions.
Note that $A_6$ has order $2^3\cdot 3^2\cdot 5=360$ so it automatically satisfies the condition about $\Qd(p)$,  since it can not include an extra-special $p$-group of order $p^3$ for an odd prime $p$. For the $q$-rank condition, note that $\PG=\{2,3\}$, so we need to check this condition only for primes $p=2$ and $3$. 
Here are some easily verified facts:

\begin{itemize}
\addtolength{\itemsep}{0.2\baselineskip}

\item A $2$-Sylow subgroup $P\leq G$ is isomorphic to the dihedral group
$D_8$, so all rank one $2$-subgroups are cyclic, and $\cH_2 = \{1, C_2, C_4\}$.

\item $N_G(C_2)=P$, and $\rk _3( N_G(C_2)/C_2)=0$.
\item $N_G(C_4)=P$ and  $\rk _3( N_G(C_4)/C_4)=0$.
\end{itemize}

Now, let $Q$ be a $3$-Sylow subgroup in $G$. Then $Q \cong C_3\times C_3$.
\begin{itemize}
\addtolength{\itemsep}{0.2\baselineskip}
\item  Any subgroup of order $3$ in $G$ is conjugate to $C_3^A=\langle (123)\rangle $ or $C_3^B=\langle (123)(456)\rangle$.
\item $|N_G(C^A_3)/C^A_3|=6$ and  $\rk_2 (N_G(C^A_3)/C^A_3)=1$. 
\item $|N_G(C_3^B)/C_3^B|=6$ and $\rk_2 (N_G(C^B_3)/C^B_3)=1$
\end{itemize}

 We conclude that condition (ii) of Theorem A holds for this group. Note that the rank one intersection property also holds since in $A_6$ the intersection of any two distinct $C_4$'s is trivial. %\remE{added} \remI{OK}
\end{example}

\begin{remark} Note that by the criteria given in \cite[Lemma 5.2]{adem-smith1}, the group $A_6$ does not have a character which is effective on elementary abelian $2$-subgroups. On the other hand, the triple cover of $A_6$ is a subgroup of $SU(3)$, and hence acts linearly on a sphere with rank one isotropy
by results of Adem, Davis and \"Unl\"u \cite[2.6, 2.9]{adem-davis-unlu1} on
the  \emph{fixity} of faithful unitary representations. More generally, they show that if $G \subset U(n)$ has fixity $f$, then $G$ acts linearly with rank one isotropy
on $U(n)/U(n-f)$. If $G \subset SU(n)$, then $G$ has fixity at most $n-2$.
\end{remark}

We now give an example which does not admit a $G$-homotopy   representation with rank one isotropy of prime power order.

\begin{example}\label{ex:aseven} \emph{The alternating group $G=A_7$
does not admit a  finite  $G$-homotopy representation with rank one prime power isotropy.} 
The  order of $G$ is $2^3\cdot 3^2\cdot 5 \cdot 7$, so this group also automatically satisfies the $\Qd(p)$ condition. 
Here is a summary of the main structural facts:
\begin{itemize}
\addtolength{\itemsep}{0.2\baselineskip}
\item The $3$-Sylow subgroup  $Q\leq G$ is isomorphic to $C_3\times C_3$.
\item Any subgroup of order $3$ in $G$ is conjugate to $C_3^A=\langle (123)\rangle $ or $C_3^B=\langle (123)(456)\rangle$.
\item  The $2$-Sylow subgroup of $N_G(C^A_3)$ is isomorphic to $D_8$.
\item $|N_G(C^A_3)/C^A_3|=24$ and $\rk_2 (N_G(C^A_3)/C^A_3)=2$.
\item $N_G(C_3^B )\cong (C_3\times C_3) \rtimes C_2$ and $\rk_2 (N_G(C^B_3)/C^B_3)=1$.
\item $|N_G(C_2)|=24$, and $\rk _3( N_G(C_2)/C_2)=1$.
\item $N_G(C_4)\cong D_8$ and $\rk _3( N_G(C_4)/C_4)=0$
\end{itemize}
We see that $\PG = \{2,3\}$, and the rank condition in Theorem A fails for $3$-subgroups, since there is a cyclic $3$-subgroup $H = C_3^A$ with $\rk _2 (N_G(H)/H)=2$.  Instead we can try to apply Theorem \ref{thm:maintech} directly by choosing $2$-effective and $3$-effective characters whose dimension functions have the closure property. A suitable $3$-effective character does exist, but it is not possible to find a $2$-effective character whose dimension function has the closure property.

Since all involutions in $G=A_7$ are conjugate,  the subgroup 
$\Omega_1(Z(G_2))$ is not strongly closed. 
%A Sylow $2$-subgroup $G_2 \leq G$ is $D_8$, 
% and all involutions in $A_7$ are conjugate, so $\Omega_1(Z(G_2))$  is not strongly closed. 
 To see that $A_7$ does not satisfy the rank one intersection property either, take $H=\la (1234)(56) \ra \cong C_4$ and $K=\la (1234) (57)\ra \cong C_4$. The intersection of these cyclic subgroups is $H\cap K =\la (13)(24)\ra\cong C_2$. But the subgroups generated by $H$ and $K$ is not a $2$-group.

%Observe that the failure of  the rank one intersection property  can not be avoided by choosing a different $2$-effective character. 
By applying the Borel-Smith conditions, we can easily show that  if there existed a    $G$-homotopy representation $X$ with rank one isotropy, then its dimension function $\un{n}$ would satisfy $\un{n}(H)=\un{n}(K)=\un{n} (H \cap K)\neq -1$, where $H$ and $K$ are given above. But then $\la H, K \ra $ will also fix a point,  contradicting our requirement that $X$ have prime power isotropy. 
\end{example}

\begin{example}\label{ex:PSU(3,3)} \emph{The group $G=PSU_3(3)$
is not    $2$-regular, but admits an
orthogonal linear action with rank one prime power isotropy}. The  order of $G$ is $2^5\cdot 3^3\cdot 7$. Here is a summary of the main structural facts:

\begin{itemize}
\addtolength{\itemsep}{0.2\baselineskip}
\item The $3$-Sylow subgroup  $G_3$ is isomorphic to the extra-special $3$-group of order $27$ and exponent $3$.
\item There are two conjugacy classes of subgroups $C_3^A$ and $C_3^B$ of order $3$.
\item $N_G(C^A_3)$ is isomorphic to $G_3 \rtimes C_8$, and $N_G(C_3^B)\cong C_3\times S_3$. In particular, $\rk_2(N_G(H)/H)=1$ for every cyclic subgroup $H\leq G$ of order $3$.
\item Sylow $2$ subgroup of $G$ is the wreathed group $(C_4\times C_4)\rtimes C_2$. 
\item All involutions in $G$ are conjugate (so $\Omega_1(Z(G_2))$ is not strongly closed). 
\item If $t$ is an involution in $G$, then $C_G(t) \cong GU_2(3)$ of order $96$, so $\rk_3 (N_G(H))\leq 1$ for every rank one $2$-subgroup $H\leq G$.
\end{itemize}
The facts listed above show that $G$ satisfies the normaliser rank condition of Theorem A. Note that $G$ also satisfies the $\Qd(p)$ condition since it has two conjugacy classes of subgroups of order $3$ and the Sylow $7$-subgroup is cyclic. 

By direct calculations in the group $GU_2(3)$ it is possible to show that $G$ does not satisfy the rank one intersection property. To see this note that $G$ includes a subgroup $H\cong GU_2(3)$ as centraliser of an involution $t \in G$. Since $SU_2(3)=SL_2(3)$, $H$ has a normal subgroup $S$ isomorphic to $SL_2(3) \cong Q_8 \rtimes C_3$ with quotient group $C_4$. In fact we can choose an element $u$ of order $8$ in $GU_2(3)$ such that $t\in \la u\ra$ and $GU_2(3)=S\cdot \la u\ra$. The group $H$ has another normal subgroup $K$ of order $16$ with quotient group $S_3$. So it is possible to find two cyclic subgroups $T=\la u\ra$, $T'=\la u' \ra$ isomorphic to $C_8$ such that $KT/K$ and $KT'/K$ corresponds to different cyclic $2$-subgroups in $S_3$. This means $\la T, T'\ra$ is not a $2$-group, but $T\cap T'\neq 1$ since $t\in T\cap T'$. Hence the rank one intersection property does not hold for $G$ (this argument was provided by Ron Solomon).

However  $G=PSU_3(3)$ does admit an orthogonal linear action with rank one isotropy 
(see \cite[Theorem 1.7]{adem-smith1}). 
By direct calculations using the character table one can show that all the nontrivial isotropy subgroups are isomorphic to one of the groups in $\{ C_2, C_3, C_4, Q_8 \}$, which are all rank one groups of prime power. 
%Hence the condition in Theorem A that $G$ be    $2$-regular is not necessary in general. 
 \end{example}
%a finite $G$-homotopy \remI{$G$-homotopy} representation with rank one prime power isotropy. To see this recall that the main ingredient in our construction is a $p$-effective representation $V_p$ for every $p\in \PG$ and the construction in Theorem $A$ works if the listed conditions hold for the isotropy subgroups of the $G$-action on $\cF_p$-representation $\VV(\cF_p)$. For $p=2$, we use the $2$-effective representation $V_2$ defined in \cite[Lemma 42]{jackson1}. The cyclic subgroups of order $8$ do not fix a point on this representation sphere so the failure of the rank one intersection property for these subgroups does not constitute a problem. 
%
%Suppose we have two $C_4$'s, say $J$ and $J'$ such that $J \cap J'\neq 1$. Let $t$ be an involution in $J\cap J'$. Then both $J, J'$ lies in the centralizer $C_G(t)\cong GU_2(3)$. Moreover $t$ is the unique central involution in any Sylow $2$-subgroup $G_2$ of $GU_2(3)$. There are two types of $C_4$'s in $G_2$ that includes $t$, the ones that lie in the normal $Q_8$ in $C_G(t)$, or the ones that lie in a $C_8$. The ones that lie in a $C_8$ do not fix a point. So we can assume both $J$ and $J'$ lie in a $Q_8$. Since there is a unique $Q_8$ in $C_G(t)$, the subgroup $\la J, J'\ra$ is a $2$-group.
%\end{example}
%
%\remI{will continue later}

\section{The proof of Theorem C}\label{sec:simple}
The finite simple groups of rank two are listed in Adem-Smith \cite[p.423]{adem-smith1} as follows:

$$PSL_2(q), \ q \geq 5;\  PSL_2({q^2}), \ q \text{\ odd \ };\  PSL_3(q), \ q \text{\ odd\ };$$
$$PSU_3(q), \ q \text{\ odd \ };\  PSU_3(4);\  A_7 \text{\ and\ } M_{11}$$
where $q$ denotes a prime. Extensive information about the maximal subgroups of these simple groups is provided in \cite{mitchell1}, \cite{\gls}. To prove Theorem C we will consider separate cases. Note that $G = A_7$ is done in Example \ref{ex:aseven}.

\nr{Case 1}{$G =PSL_2(q), q \geq 5$} The order of $G$ is $q(q^2-1)/2$ and the maximal subgroups of $G$ are listed in \cite[6.5.1]{\gls}. From this list it is easy to see that the $2$-Sylow subgroup of $G$ is a dihedral group and for odd primes the Sylow subgroups are cyclic (see also \cite[4.10.5]{\gls}). It follows that $\PG = \{2\}$ and $G$ is $\Qd(p)$-free at odd primes, so Corollary B applies. We only need to show that $G$ satisfies  the rank one intersection property. This follows from the fact that the centraliser $C_G(z)$ of an involution $z \in G$ is a dihedral group of order $D_{q-\delta}$ where $\delta=\mp 1$ and $\delta \equiv q \mod 4$ (see \cite[Lemma 3.1]{gorenstein-walter1}). This implies that $C_G(z)$ has a cyclic subgroup of index $2$ which contains every element of $C_G(z)$ of order greater than 2. Hence we can not have two distinct cyclic $2$-subgroups in $G$ with  nontrivial intersection unless they include each other. %\remE{changed, added reference} \remI{OK}

By inspecting the character table of $G$, and applying the criterion \cite[Lemma 5.2]{adem-smith1}, we see that $PSL_2(q)$, $q >7$, does not admit an orthogonal representation $V$ with rank one isotropy on $S(V)$.

\medskip\noindent
\nr{Case 2}{$G =PSL_2({q^2}), q \geq 3$}  We did $PSL_2(9) = A_6$ explicitly in Example \ref{ex:asix}. In general, 
the order of $G$ is $q^2(q^4-1)/2$ 
and the maximal subgroups are again listed in  \cite[6.5.1]{\gls}. The conditions on the normalizer quotients needed for Theorem A can be checked 
at the primes $\PG = \{2, q\}$ using the information in  \cite{\gls},   and \cite[Chap.~II]{huppert1}. The $2$-Sylow subgroups are dihedral \cite[4.10.5]{\gls}, and the $q$-Sylow subgroup $Q$ is elementary abelian of rank two \cite[6.5.1]{\gls} (with normalizer $N_G(Q)$ represented by the parabolic subgroup of upper triangular matrices). At the other primes $p \neq 2, q$, any $p$-Sylow subgroup is contained in a dihedral group, and hence cyclic (see \cite[II.8.27]{huppert1}). The rank one intersection property can be checked in a similar way as in Case 1.

\nr{Case 3}{$PSL_3(q), q\geq 3$}  We refer to \cite[\S 15]{mitchell1} or \cite[6.5.3]{\gls} for the maximal subgroups. Since $G$ contains $\Qd(p)$ for $p = q$, this series of groups is ruled out. An explicit embedding is given by the matrices:
$$\Qd(p) = \left \{ {\left [ \vcenter{\xymatrix@C-20pt@R-25pt{a&b&e\cr c&d&f\cr 0&0&1}}
\right ]} \  : \   ad-bc = 1\right \} $$
with entries in $\bF_q$.
 
 \nr{Case 4}{$G = PSU_3(q), q \geq 3$} 
The order of $G$ is 
 $(q^3+1)q^3(q^2-1)/d$, 
 where $d = (3, q+1)$, and the maximal subgroups are given in \cite[\S 16]{mitchell1} or \cite[6.5.3]{\gls}. In particular, $G$ contains an abelian subgroup of order $(q+1)^2/d$. 
  If $9 \mid (q+1)$, then $G$ contains $\Qd(3)$, hence is ruled out,  so we assume that $9 \nmid (q+1)$. 
  
  If $3 \mid (q+1)$, then the 
 $3$-Sylow subgroup of $G$ is elementary abelian of order 9.  If $r>3$ is an odd prime dividing $q+1$, then the $r$-Sylow subgroup is abelian of rank two, and order equal to  the $r$-primary part of $(q+1)^2$. 
 Finally, if $r$ is an odd prime not dividing $q+1$, then $r$ divides $(q^2 - q+1)$ or $r$ divides $q-1$, 
  and the $r$-Sylow subgroup of $G$ is cyclic (see \cite[6.5.3(c)]{\gls},  
  \cite[p.~228, 241]{mitchell1} for the list of subgroups, and \cite[II.10.12]{huppert1},  and \cite[\S 1]{onan1} for additional details about the structure). 
   In summary, $\PG= \{2, q\} \cup \{r \mid (q+1):  \ r \text{\ an odd prime}\}$.

It is easy to see that the normaliser rank condition fails if an odd prime $r$ divides $(q+1)$, since  $2 \mid (q+1)^2/d $. Note that this failure can not be avoided by choosing a different representation since there is only one conjugacy class of order $2$ and order $3$ elements. The reason that there is only one conjugacy class of subgroups of order $3$ follows from the fact that when $3 \mid (q+1)$, then $G$ includes $PSU_3(2)= (C_3 \times C_3) \rtimes Q_8$ as a subgroup  (see \cite[6.5.3(d)]{\gls}, \cite[p.~241]{mitchell1}) and in this group there is only one conjugacy class of subgroups of order $3$. Hence $G = PSU_3(q)$ does not admit a finite $G$-homotopy representation with rank one prime power isotropy if an odd prime $r$ divides $(q+1)$.
 
 In the only remaining case we have $q+1=2^n$. The $2$-Sylow subgroup $G_2$ of $G$ is  the wreathed group $(C_{2^n}\times C_{2^n})\rtimes C_2$ and all involutions in $G$ are conjugate (a good reference for the facts we need is \cite[Chap.~I]{alperin-brauer-gorenstein1}).
 The case $q=3$ was discussed in Example \ref{ex:PSU(3,3)}, so we will assume $q+1=2^n$ with $n\geq 3$. 
 By direct calculation it can be shown that in this case
$\PG=\{ 2, q \}$ and the normalizer rank condition holds for these primes. However, it turns out that the rank one intersection property fails in this case. It is possible to find a pair of cyclic $2$-subgroups, one of which is a $C_4$ which intersect nontrivially and the subgroup that generate is not a $2$-group.  
In fact, the rank one intersection property fails in general for $G=PSU_3(q)$, $q\geq 3$. Again this failure can not be avoided.

\begin{proposition}
The group $G=PSU_3(q)$ does not admit a finite $G$-homotopy representations with rank one prime power isotropy, for $q+1=2^n$ with $n\geq 3$. 
\end{proposition} 
\begin{proof}
   Suppose that $G$ admits a  finite  $G$-homotopy representation $X$ with dimension functions $\un{n}$ such that only isotropy subgroups are rank one prime power subgroups. 
%   We apply the Borel-Smith conditions to the subgroup lattice of $G_2$. 
   Let  $G_2$ be generated by $x,y, z$ satisfying the relations $x^{2^n}=y^{2^n}=z^2=1$ and $zxz=y$.
    We apply the Borel-Smith conditions to the subgroup lattice of $G_2$, starting with the quotient group $U/V = C_2 \times C_2$ with $U=\la xy^{-1}, x^{2^{n-1}}, z\ra$ and $V=\la xy^{-1}\ra$. This gives $\un{n}(V)=\un{n} (Q)$ where $Q=\la xy^{-1}, x^{2^{n-1}}z \ra$ is a generalized quaternion subgroup of $U$ that includes $V$. Applying the Borel-Smith conditions to the subquotients of the  dihedral subgroup of $U$ that includes $V$, we can obtain further that $\un{n} (Q)=\un{n} (H)=\un{n}(\la t\ra )\neq -1$
where $H$ is any nontrivial subgroup of $V$. 

To reach to a contradiction, we choose an involution $t\in G$ and consider its centraliser $C_G(t)$, which is isomorphic to the group $GU_2(q)$ (see \cite[Proposition 4, p.~21]{alperin-brauer-gorenstein1}). This group has $SU_2(q) \cong SL_2(q)$ as a subgroup, so in $C_G(t)$ there is a subgroup $S$ isomorphic to $SL_2(q)$. When $q>3$, the Sylow $2$-subgroup of $SL_2(q)$ is not normal, hence  in $S$ it is possible to find two conjugate quaternion subgroups $Q$ and $Q'$ such that $t\in Q\cap Q'$. Note that by monotonicity of the dimension function,  the equalities $\un{n}(Q)=\un{n} (Q')=\un{n}(\la t\ra)$ imply that $\un{n}(Q)=\un{n} (Q')=\un{n}(Q\cap Q')$. 

We claim that $\la Q, Q' \ra$ is not a $2$-group. This will prove that $\un{n}$ does not have the closure property, hence it will give a contradiction to the existence of the $G$-homotopy representation with rank one prime power isotropy. Suppose to the contrary that $\la Q, Q' \ra$ is a $2$-group. Then both $Q$ and $Q'$ will lie, as distinct subgroups, in a Sylow $2$-subgroup $G_2$ of $G=PSU_3(q)$. But the wreathed group $(C_{2^n}\times C_{2^n})\rtimes C_2$ has a unique quaternion subgroup of order $2^{n+1}$ (see \cite[Lemma 2(v), p.~9]{alperin-brauer-gorenstein1}). This is a contradiction.
\end{proof}

 \nr{Case 5}{$G = M_{11}$} The order of $G$ is $7920 = 2^4\cdot 3^2\cdot 5\cdot 11$ and $\PG = \{2, 3\}$. This group is $\Qd(p)$-free, but the $2$-rank $\rk_2 (N_G(H)/H)=2$, for $H$ a subgroup of order three (see  \cite[p.~262]{\gls}). 
 Since all the subgroups of order three are conjugate, $G$ does not admit a finite $G$-homotopy representation with rank one prime power isotropy.                                                     
 
\nr{Case 6}{$G = PSU_3(4)$} The order of $G$ is $65280 = 2^6 \cdot 3 \cdot 5^2 \cdot 13$ and $\PG = \{2, 5\}$. This group has a linear representation $V$ such that the $G$-action on $S(V)$ has rank one isotropy 
(see \cite[p.~425]{adem-smith1}). One can check that all
 the nontrivial isotropy subgroups are in the set $\{C_2, C_3, C_4, C_5\}$.
  
%%%%%%%%%%%%%%%%%%%%%%%%%%%%%%%%%%%               
% \bibliographystyle{ih}                                      
%\bibliography{ihmain}

\begin{thebibliography}{10}

\bibitem{adem_2007}
A.~Adem, \emph{Lectures on the cohomology of finite groups}, Interactions
  between homotopy theory and algebra, Contemp. Math., vol. 436, Amer. Math.
  Soc., Providence, RI, 2007, pp.~317--334.

\bibitem{adem-davis-unlu1}
A.~Adem, J.~F. Davis, and {\"O}.~{\"U}nl{\"u}, \emph{Fixity and free group
  actions on products of spheres}, Comment. Math. Helv. \textbf{79} (2004),
  758--778.

\bibitem{adem-smith1}
A.~Adem and J.~H. Smith, \emph{Periodic complexes and group actions}, Ann. of
  Math. (2) \textbf{154} (2001), 407--435.

\bibitem{alperin-brauer-gorenstein1}
J.~L. Alperin, R.~Brauer, and D.~Gorenstein, \emph{Finite groups with
  quasi-dihedral and wreathed {S}ylow {$2$}-subgroups.}, Trans. Amer. Math.
  Soc. \textbf{151} (1970), 1--261.

\bibitem{benson2}
D.~J. Benson, \emph{Representations and cohomology. {II}}, second ed.,
  Cambridge Studies in Advanced Mathematics, vol.~31, Cambridge University
  Press, Cambridge, 1998, Cohomology of groups and modules.

\bibitem{bredon1}
G.~E. Bredon, \emph{Introduction to compact transformation groups}, Academic
  Press, New York, 1972, Pure and Applied Mathematics, Vol. 46.

\bibitem{dold1}
A.~Dold, \emph{Zur {H}omotopietheorie der {K}ettenkomplexe}, Math. Ann.
  \textbf{140} (1960), 278--298.

\bibitem{dotzel-hamrick1}
R.~M. Dotzel and G.~C. Hamrick, \emph{{$p$}-group actions on homology spheres},
  Invent. Math. \textbf{62} (1981), 437--442.

\bibitem{gorenstein-lyons-solomon1998}
D.~Gorenstein, R.~Lyons, and R.~Solomon, \emph{The classification of the finite
  simple groups. {N}umber 3. {P}art {I}. {C}hapter {A}}, Mathematical Surveys
  and Monographs, vol.~40, American Mathematical Society, Providence, RI, 1998,
  Almost simple $K$-groups.
  
\bibitem{gorenstein-walter1} 
D.~Gorenstein and J.~H.~Walter, \emph{The characterization of finite groups with dihedral Sylow 2-subgroups I*}, J. Algebra  \textbf{2} (1965), 85--151. 
%\remE{added}

\bibitem{hpy1}
I.~Hambleton, S.~Pamuk, and E.~Yal{\c{c}}{\i}n, \emph{Equivariant
  {CW}-complexes and the orbit category}, Comment. Math. Helv. \textbf{88}
  (2013), 369--425.

\bibitem{h-yalcin3}
I.~Hambleton and E.~Yal{\c{c}}{\i}n, \emph{Homotopy representations over the orbit category}, Homology Homotopy Appl. \textbf{16} (2014), 345--369.
  
\bibitem{huppert1}
B.~Huppert, \emph{Endliche {G}ruppen. {I}}, Die Grundlehren der Mathematischen
  Wissenschaften, Band 134, Springer-Verlag, Berlin, 1967.

\bibitem{jackowski-mcclure-oliver1}
S.~Jackowski, J.~McClure, and B.~Oliver, \emph{Homotopy classification of
  self-maps of {$BG$} via {$G$}-actions. {I}}, Ann. of Math. (2) \textbf{135}
  (1992), 183--226.

\bibitem{jackson2}
M.~A. Jackson, \emph{A quotient of the set {$[BG,B{\rm U}(n)]$} for a finite
  group {$G$} of small rank}, J. Pure Appl. Algebra \textbf{188} (2004),
  161--174.

\bibitem{jackson1}
\bysame, \emph{{${\rm Qd}(p)$}-free rank two finite groups act freely on a
  homotopy product of two spheres}, J. Pure Appl. Algebra \textbf{208} (2007),
  821--831.

\bibitem{lueck3}
W.~L{\"u}ck, \emph{Transformation groups and algebraic {$K$}-theory}, Lecture
  Notes in Mathematics, vol. 1408, Springer-Verlag, Berlin, 1989, Mathematica
  Gottingensis.

\bibitem{mitchell1}
H.~H. Mitchell, \emph{Determination of the ordinary and modular ternary linear
  groups}, Trans. Amer. Math. Soc. \textbf{12} (1911), 207--242.

\bibitem{onan1}
M.~E. O'Nan, \emph{Automorphisms of unitary block designs}, J. Algebra
  \textbf{20} (1972), 495--511.

\bibitem{pamuks1}
S.~Pamuk, \emph{Periodic resolutions and finite group actions}, Ph.D. thesis,
  McMaster University, 2008.

\bibitem{petrie1978}
T.~Petrie, \emph{Three theorems in transformation groups}, Algebraic topology,
  {A}arhus 1978 ({P}roc. {S}ympos., {U}niv. {A}arhus, {A}arhus, 1978), Lecture
  Notes in Math., vol. 763, Springer, Berlin, 1979, pp.~549--572.

\bibitem{swan1}
R.~G. Swan, \emph{Periodic resolutions for finite groups}, Ann. of Math. (2)
  \textbf{72} (1960), 267--291.

\bibitem{tomDieck1981}
T.~tom Dieck, \emph{\"{U}ber projektive {M}oduln und {E}ndlichkeitshindernisse
  bei {T}ransformationsgruppen}, Manuscripta Math. \textbf{34} (1981),
  135--155.

\bibitem{tomDieck1985}
\bysame, \emph{The homotopy type of group actions on homotopy spheres}, Arch.
  Math. (Basel) \textbf{45} (1985), 174--179.

\bibitem{tomDieck2}
\bysame, \emph{Transformation groups}, de Gruyter Studies in Mathematics,
  vol.~8, Walter de Gruyter \& Co., Berlin, 1987.

\bibitem{unlu1}
{\"O}.~{\"U}nl{\"u}, \emph{Constructions of free group actions on products of
  spheres}, Ph.D. thesis, University of Wisconsin, 2004.

\bibitem{unlu-yalcin3}
{\"O}.~{\"U}nl{\"u} and E.~Yal{\c{c}}{\i}n, \emph{Constructing homologically
  trivial actions on products of spheres}, Indiana Univ. Math. J. \textbf{62}
  (2013), 927--945.

\end{thebibliography}
%\end{document}

%%%%%%%%%%%%%%%%%%%%%%%%%%%%%%%%%%
\providecommand{\bysame}{\leavevmode\hbox to3em{\hrulefill}\thinspace}
\providecommand{\MR}{\relax\ifhmode\unskip\space\fi MR }
% \MRhref is called by the amsart/book/proc definition of \MR.
\providecommand{\MRhref}[2]{%
  \href{http://www.ams.org/mathscinet-getitem?mr=#1}{#2}
}
\providecommand{\href}[2]{#2}

\end{document}